\newtheorem{theoreme}{Théorème}[section]
\newtheorem{lemme}[theoreme]{Lemme}
\newtheorem{corollaire}[theoreme]{Corollaire}
\newtheorem{conjecture}[theoreme]{Conjecture}
\theoremstyle{remark}
\newtheorem{remarque}{\it Remarque}[section]
\numberwithin{equation}{section}
\newcommand{\A}{\mathcal{A}}
\newcommand{\B}{\mathcal{B}}
\newcommand{\bbR}{\mathbb{R}}
\newcommand{\bbZ}{\mathbb{Z}}
\title{Dynamique du problème $3x+1$ sur la droite réelle}
\author{Nik Lygeros et Olivier Rozier}
\address{LGPC (UMR 5285), Université de Lyon, 69616, Villeurbanne, France}
\email{nlygeros@gmail.com}
\address{IPGP (UMR 7154), 75238, Paris, France}
\email{olivier.rozier@gmail.com}
\begin{document}

\sloppy 

\maketitle

\begin{abstract}
Le \textit{problème $3x+1$} est une conjecture réputée difficile au sujet d'un algorithme pourtant très simple sur les entiers positifs. Une approche possible est de sortir du cadre discret. M. Chamberland a ainsi utilisé une extension analytique sur la demi-droite $\mathbb{R}^{+}$. Nous complétons ses résultats sur la dynamique des points critiques et obtenons une nouvelle formulation du problème $3x+1$. Nous explicitons alors les liens avec la question de l'existence d'intervalles errants. Puis nous étendons l'étude de la dynamique à la demi-droite $\mathbb{R}^{-}$, en relation avec le \textit{problème $3x-1$}. Enfin nous analysons le comportement moyen des itérations réelles au voisinage de $\pm \infty$ et présentons une méthode basée sur un argument heuristique de R. E. Crandall dans le cas discret. Il suit que la vitesse moyenne des itérations réelles est proche de $ (2+\sqrt{3})/4 $ sous condition de distribution uniforme modulo 2. 

\end{abstract}

\renewcommand\abstractname{Abstract}
\begin{abstract}
The $3x+1$ \textit{problem} is a difficult conjecture dealing with quite a simple algorithm on the positive integers. A possible approach is to go beyond the discrete nature of the problem, following M. Chamberland who used an analytic extension to the half-line $\mathbb{R}^{+}$. We complete his results on the dynamic of the critical points and obtain a new formulation the $3x+1$ problem. We clarify the links with the question of the existence of wandering intervals. Then, we extend the study of the dynamic to the half-line $\mathbb{R}^{-}$, in connection with the $3x-1$ \textit{problem}. Finally, we analyze the mean behaviour of real iterations near $\pm \infty$ and  present a method based on a heuristic argument by R. E. Crandall in the discrete case. It follows that the average growth rate of the iterates is close to $(2+\sqrt{3})/4$ under a condition of uniform distribution modulo 2. 
\end{abstract}

\section{Introduction}
\label{intro}

Généralement attribué à Lothar Collatz, le \textit{problème $3x+1$} est aussi appelé  \textit{conjecture de Syracuse}, en référence à l'Université du même nom. Il se rapporte à la fonction $T$ définie sur les entiers positifs par
\begin{equation}
T(n) := \left\{\begin{array}{cl}
  (3n+1)/2 & \mbox{si n est impair,} \\
  n/2 & \mbox{sinon.} 
\end{array}\right.
\end{equation}

Il s'agit de prouver que toute itération de $T$ à partir d'un entier positif $n$ arbitraire conduit nécessairement à la valeur 1. Cette valeur est cyclique de période 2 : $T(T(1)) = 1$.

\begin{conjecture}\label{conj_T} \textbf{Problème $3x+1$}\\
Pour tout entier $n > 0$,  il existe un entier $k \geq 0$ tel que $T^{k}(n) = 1$.\footnote{On note $T^{k}(n)$ le $k^{\text{ème}}$ itéré de $T$.}
\end{conjecture}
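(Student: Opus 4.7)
\emph{Remarque préliminaire.} L'énoncé~\ref{conj_T} n'est autre que la célèbre conjecture de Collatz, ouverte depuis les années 1930 malgré d'innombrables tentatives. Il serait illusoire d'en proposer une démonstration complète ; je me borne donc à esquisser une stratégie cohérente avec l'approche du présent article, à savoir le transfert du problème discret à l'étude d'une dynamique réelle.

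Le plan consisterait à étendre $T$ en une application lisse $f : \mathbb{R}^+ \to \mathbb{R}^+$ coïncidant avec $T$ sur $\mathbb{N}^*$ (par exemple l'extension de Chamberland via interpolation trigonométrique), puis à exploiter les outils de la dynamique unidimensionnelle. Concrètement, il faudrait : (i) décrire complètement l'orbite des points critiques de $f$ et localiser les cycles attractifs éventuels ; (ii) prouver l'absence d'intervalle errant en s'appuyant sur des théorèmes de type Denjoy ou Martens--de Melo--van Strien ; (iii) classifier les cycles périodiques de $f$ et vérifier que le seul d'entre eux qui rencontre $\mathbb{N}^*$ est le cycle trivial $\{1,2\}$. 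La conjecture en découlerait : toute orbite d'un entier $n$ serait capturée par un cycle attractif de $f$, et comme l'unique cycle attractif contenant un entier serait $\{1,2\}$, on obtiendrait $T^k(n)=1$ pour un certain $k$.

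L'obstacle principal est patent : il s'agit d'exclure simultanément l'existence de cycles entiers non triviaux (de longueur $\geq 3$) et celle d'orbites entières divergentes. Les approches arithmétiques fournissent certes des bornes inférieures très grandes sur les longueurs de cycles hypothétiques (Eliahou, Steiner, \ldots), mais ne permettent pas de les éliminer. Quant à la divergence, l'heuristique de Crandall rappelée dans le résumé, donnant une vitesse moyenne de croissance $(2+\sqrt{3})/4 < 1$, la rend statistiquement invraisemblable sans en constituer une preuve. C'est précisément pour cette raison que le présent article ne prétend pas résoudre la conjecture, mais se concentre sur une reformulation via la dynamique réelle et sur l'étude de quantités asymptotiques associées.
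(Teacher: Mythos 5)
L'\'enonc\'e vis\'e est la conjecture~\ref{conj_T} elle-m\^eme, c'est-\`a-dire le probl\`eme $3x+1$ : il s'agit d'un probl\`eme ouvert et l'article n'en donne aucune d\'emonstration. Votre refus d'en fabriquer une est donc le seul parti correct, et la strat\'egie que vous esquissez correspond pour l'essentiel au programme effectivement suivi ici : extension r\'eelle \eqref{eq_ext} de Chamberland, dynamique des points critiques (le th\'eor\`eme~\ref{th_odd_critical} et son corollaire \'etablissent que la conjecture~\ref{conj_odd_critical} sur les points critiques d'ordre impair est logiquement \'equivalente au probl\`eme $3x+1$), absence d'intervalles errants et d'orbites non born\'ees (th\'eor\`emes~\ref{th_unbound} et~\ref{th_wander}). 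Une seule retouche s'impose : la constante $(2+\sqrt{3})/4$ que vous attribuez \`a \og l'heuristique de Crandall \fg{} n'est pas la vitesse moyenne attendue pour les suites d'entiers --- celle-ci vaut $\sqrt{3}/2$ (ou $3/4$ pour la sous-suite des it\'er\'es impairs, selon Crandall) --- mais la valeur asymptotique \'etablie au th\'eor\`eme~\ref{th_estim} pour les it\'erations r\'eelles au voisinage de $\pm\infty$ sous condition de distribution uniforme modulo~2 ; le th\'eor\`eme~\ref{th_estim} est d'ailleurs explicitement d\'eclar\'e inop\'erant pour les s\'equences d'entiers, et les deux constantes ne doivent pas \^etre confondues.
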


La figure \ref{fig: inv_tree} représente toutes les orbites qui aboutissent à 1 en un maximum de sept itérations.\\

\begin{figure}[t]
\centering
\includegraphics[scale=0.5]{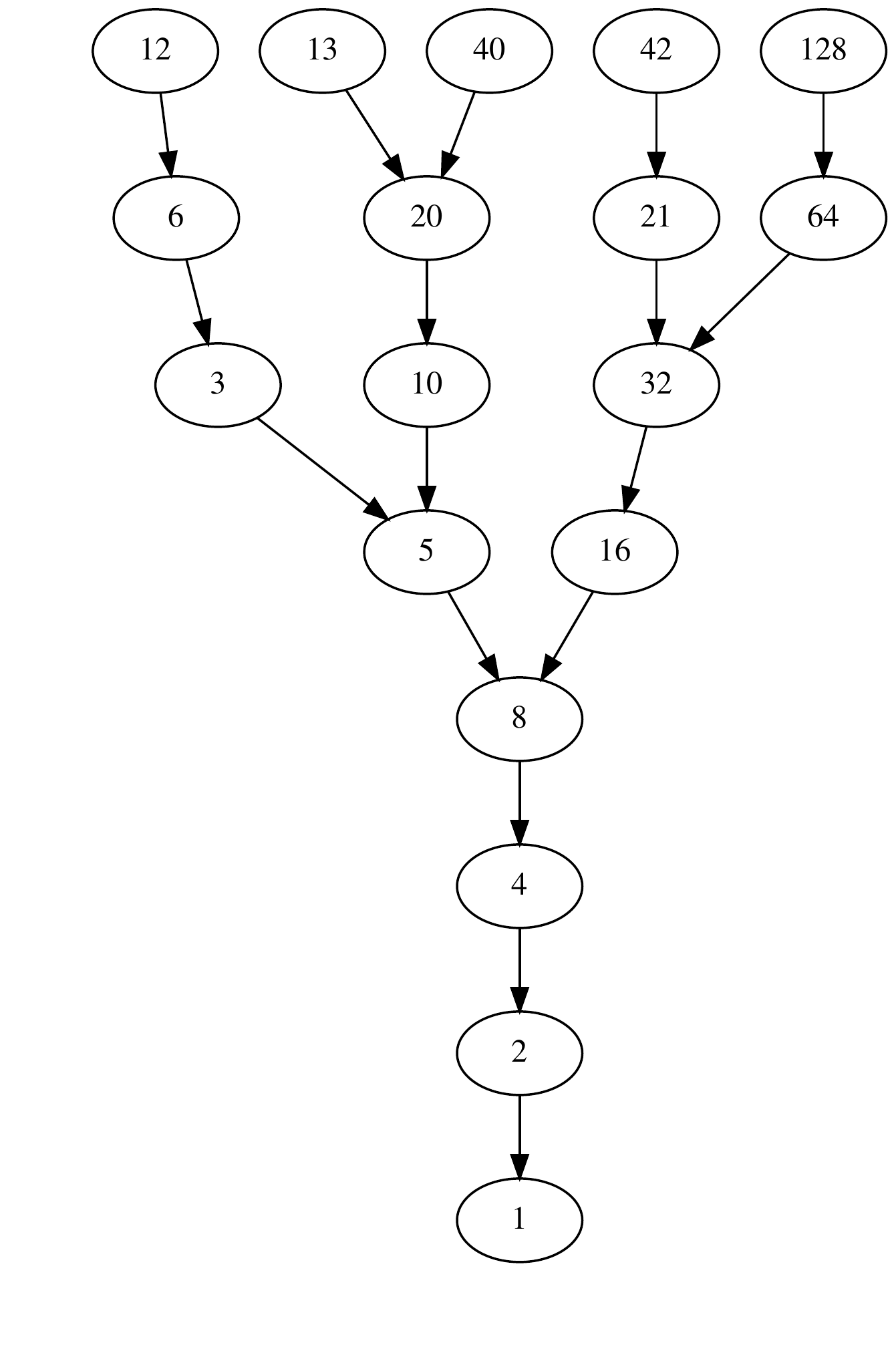}
\caption{Arbre inverse du problème $3x+1$ représentant l'ensemble des antécédents de 1 sur sept itérations.}
\label{fig: inv_tree}
\end{figure}

Le problème $3x+1$ se ramène entièrement aux deux conjectures \ref{conj_T1} et \ref{conj_T2} sur la dynamique de la fonction $T$.

\begin{conjecture}\label{conj_T1} \textbf{Absence de trajectoires divergentes}\\
Tout entier positif $n$ a une orbite $\left\lbrace   T^{i}(n) \right\rbrace  _{i=0}^{\infty}$ bornée.
\end{conjecture}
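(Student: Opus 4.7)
Cette conjecture constitue le c\oe ur du probl\`eme $3x+1$ et aucune d\'emonstration n'en est connue \`a ce jour; je pr\'esente donc un plan hypoth\'etique en signalant les obstacles majeurs plut\^ot qu'une strat\'egie abouti.

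L'angle d'attaque le plus naturel est probabiliste, via l'heuristique de Crandall mentionn\'ee dans le r\'esum\'e. Si les r\'esidus $T^{i}(n) \bmod 2$ sont asymptotiquement \'equidistribu\'es le long d'une orbite, la moyenne g\'eom\'etrique des rapports $T^{i+1}(n)/T^{i}(n)$ vaut $\sqrt{3}/2 < 1$, d'o\`u une d\'ecroissance exponentielle moyenne. Pour transformer cette intuition en preuve, j'essaierais de construire une fonction de Lyapunov de la forme $V(n) = \log n + \varphi(n)$, o\`u $\varphi$ est une correction p\'eriodique modulo une puissance de $2$ bien choisie, telle que $V(T(n)) \leq V(n) - \varepsilon$ sur un sous-ensemble d'entiers de densit\'e aussi proche de $1$ que l'on veut, puis propager ce contr\^ole \`a tous les entiers positifs par un argument de r\'ecurrence sur la $2$-valuation et sur l'arbre inverse de la figure~\ref{fig: inv_tree}.

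Une seconde voie, sp\'ecifique \`a la d\'emarche de l'article, exploiterait l'extension analytique \`a $\bbR^{+}$ utilis\'ee par Chamberland. L'id\'ee serait d'\'etablir qu'une orbite enti\`ere divergente engendrerait, par continuit\'e de l'extension, un comportement pathologique au voisinage des grandes valeurs enti\`eres, incompatible avec les r\'esultats sur la dynamique des points critiques et sur l'absence d'intervalles errants. Le lien formel \`a d\'emontrer serait: \emph{l'existence d'une trajectoire enti\`ere divergente implique l'existence d'un intervalle errant dans l'extension r\'eelle}, ce qui ram\`enerait la conjecture \`a une question de dynamique r\'eelle unidimensionnelle, a priori plus accessible aux techniques de Sullivan, Ma\~n\'e, de Melo et van Strien.

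L'obstacle principal, et la raison pour laquelle la conjecture r\'esiste depuis Collatz, est universel: tous les arguments rigoureux connus (Terras, Lagarias, et plus r\'ecemment Tao en 2019 avec la densit\'e logarithmique) ne traitent qu'un ensemble de densit\'e $1$ d'entiers, laissant subsister la possibilit\'e d'une trajectoire exceptionnelle isol\'ee qui \'echapperait \`a la tendance moyenne. Passer de \guillemotleft presque toute orbite\guillemotright{} \`a \guillemotleft toute orbite\guillemotright{} requiert un principe de r\'ecurrence ou d'uniformit\'e que la nature arithm\'etique chaotique de l'it\'eration $T$ semble interdire avec les outils actuels; c'est pr\'ecis\'ement pour contourner cette barri\`ere que le pr\'esent article introduit le cadre r\'eel continu.
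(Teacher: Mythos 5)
L'\'enonc\'e en question est une \emph{conjecture} (la conjecture \ref{conj_T1}) : l'article n'en donne aucune d\'emonstration, puisqu'il s'agit de l'une des deux composantes ouvertes du probl\`eme $3x+1$. Votre refus de produire une preuve compl\`ete est donc la seule r\'eponse correcte, et il n'existe pas de d\'emonstration de r\'ef\'erence \`a laquelle comparer votre texte.

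Un point m\'erite toutefois d'\^etre soulign\'e. La r\'eduction que vous pr\'esentez comme un lien formel restant \`a d\'emontrer --- \emph{l'existence d'une trajectoire enti\`ere divergente implique l'existence d'un intervalle errant pour l'extension r\'eelle} --- n'est pas hypoth\'etique dans cet article : c'est exactement le contenu du th\'eor\`eme \ref{th_wander}(b), \'etabli via le lemme \ref{lem_stable} (stabilit\'e des intervalles $J_{n}^{7/2}$ sous l'it\'eration de $f$ : si $f^{i}(n)\to+\infty$, les intervalles $f^{i}\bigl(J_{n}^{7/2}\bigr)$ sont contenus dans les $J_{f^{i}(n)}^{7/2}$, deux \`a deux disjoints, et forment une famille errante). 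De m\^eme, le th\'eor\`eme \ref{th_unbound} montre que la conjecture \ref{conj_unbound} entra\^{\i}ne la conjecture \ref{conj_T1}. Votre seconde voie co\"{\i}ncide donc avec la strat\'egie effectivement men\'ee \`a bien dans l'article ; mais elle ne fait que transf\'erer la difficult\'e vers les conjectures \ref{conj_wander} et \ref{conj_unbound}, car l'argument de d\'eriv\'ee Schwarzienne n\'egative (propri\'et\'e \eqref{prop_schwartz}), invoqu\'e au th\'eor\`eme \ref{th_wander}(a), n'exclut les intervalles errants que sur les parties born\'ees de $\bbR^{+}$, c'est-\`a-dire pr\'ecis\'ement pas dans le r\'egime divergent qui nous int\'eresse. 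Quant \`a votre premi\`ere voie (fonction de Lyapunov fond\'ee sur l'\'equidistribution modulo 2), elle se heurte \`a l'obstacle que vous identifiez vous-m\^eme ; notez que le th\'eor\`eme \ref{th_liminf} va dans le m\^eme sens en montrant qu'une orbite r\'eelle restant loin de l'origine ne peut \^etre uniform\'ement distribu\'ee modulo 2, ce qui indique que l'hypoth\`ese statistique centrale de cette approche est justement celle qui fait d\'efaut sur les trajectoires exceptionnelles.
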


\begin{conjecture}\label{conj_T2} \textbf{Absence de cycles non-triviaux}\\
Il n'existe pas d'entiers $n > 2$ et $k > 0$ tels que $T^{k}(n) = n$.
\end{conjecture}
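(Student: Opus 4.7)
Cet �nonc� constitue, avec la conjecture \ref{conj_T1}, l'une des deux moiti�s du probl�me $3x+1$, et demeure � ce jour ouvert malgr� d'intenses efforts. Le plan r�aliste consiste � raisonner par l'absurde en supposant l'existence d'un cycle non-trivial $(n_0, n_1, \ldots, n_{k-1})$ de $T$ avec $n_0 \geq 3$ son plus petit �l�ment. Soit $p$ le nombre d'indices $i$ pour lesquels $n_i$ est impair et $q = k - p$ le nombre d'indices pairs. Un t�lescopage direct de la r�currence le long du cycle fournit une identit� de la forme
\begin{equation}
n_0 \, (2^q - 3^p) \;=\; S,
\end{equation}
o� $S$ est un entier positif d�pendant explicitement de la suite des parit�s rencontr�es et born� par un multiple de $3^p$. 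Il en d�coule d'une part que $2^q > 3^p$, d'autre part que le rapport $q/p$ approche $\log 3 / \log 2$ d'autant plus finement que $n_0$ est grand.

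L'�tape d�cisive serait d'invoquer une minoration effective de la forme lin�aire $|q \log 2 - p \log 3|$, typiquement via le th�or�me de Baker sur les formes lin�aires en logarithmes d'alg�briques, pour forcer $\max(p,q)$ � �tre astronomiquement grand d�s que $n_0$ d�passe une certaine borne $B$. On �liminerait ensuite la zone $n_0 \leq B$ par v�rification num�rique exhaustive de la conjecture \ref{conj_T}, et on croiserait ce r�sultat avec les minorations directes de $k$ obtenues par analyse combinatoire fine des suites de parit�s admissibles dans un cycle (m�thodes � la Eliahou, Simons-de Weger).

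L'obstacle principal, et la raison pour laquelle cet �nonc� reste ouvert, est l'�cart irr�ductible entre ces deux approches : les bornes issues des formes lin�aires en logarithmes, bien qu'explicites, demeurent trop faibles devant la borne $B$ effectivement v�rifiable par le calcul. Une preuve compl�te devra en outre tirer parti d'une sp�cificit� arithm�tique du ``$+1$'' par rapport au ``$-1$'', le probl�me $3x-1$ admettant pour sa part plusieurs cycles non-triviaux connus ; cette asym�trie sugg�re que l'analyse de l'extension analytique r�elle d�velopp�e dans le pr�sent article, et en particulier l'�tude des points critiques et des intervalles errants, pourrait � terme fournir un angle d'attaque compl�mentaire aux approches purement diophantiennes.
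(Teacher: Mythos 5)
The statement you were asked to prove is Conjecture \ref{conj_T2}, the absence of non-trivial cycles: it is one half of the $3x+1$ problem itself, and the paper offers no proof of it --- it is stated precisely as an open conjecture. You correctly recognize this, and your text is a survey of the standard Diophantine line of attack (telescoping the cycle relation, lower bounds for linear forms in logarithms, combined with exhaustive verification up to a bound, in the spirit of Simons--de Weger) rather than a proof. Judged as a proof it is therefore incomplete by construction, but that reflects the nature of the statement, not a flaw in your reasoning; your description of why the two sides of the argument fail to meet is accurate.

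Two remarks on the details. First, the telescoped identity in the $T$-formulation reads $n_0\,(2^{k}-3^{p})=S$ with $k$ the full cycle length, since every step of $T$ divides by $2$ exactly once; the exponent is not $q=k-p$, the number of even indices, as you wrote. Your version of the identity would be correct only in the accelerated odd-to-odd formulation, where the exponent counts the total number of halvings. Second, the paper's sole contribution to this conjecture is a reformulation: by property \eqref{prop_int_cycles} every cycle of positive integers is attracting for the real extension $f$, so Conjecture \ref{conj_cycles} (no attracting cycle of $f$ on $[\mu_{3},+\infty)$) implies Conjecture \ref{conj_T2}. If you wanted your discussion to connect with the present paper, that implication --- together with the critical-point criterion of Section \ref{critic}, which is what makes the ``Stable Set'' conjecture numerically testable --- is the relevant bridge between the Diophantine approaches you describe and the real-dynamical one developed here.
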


La conjecture \ref{conj_T1} implique que tout entier positif a une orbite cyclique à partir d'un certain rang par itération de $T$. La conjecture \ref{conj_T2} stipule que le seul cycle possible est le cycle $(1,2)$.

Généralement, on convient de stopper les itérations lorsque la valeur 1 est atteinte. Ainsi on appelle \textit{temps de vol} de $n$ le plus petit entier $k$ tel que $T^{k}(n) = 1$.

T. Oliveira e Silva a vérifié par des calculs sur ordinateur que tout entier positif $n<5 \cdot 2^{60}$ a un temps de vol fini \cite{Lag10,RooWeb}.

Les conjectures \ref{conj_T1} et \ref{conj_T2}, bien qu'abondamment étudiées, ne sont toujours pas résolues. On pourra se référer aux ouvrages de J. Lagarias \cite{Lag10} et G.J. Wirsching \cite{Wir98} pour une synthèse détaillée des résultats partiels relatifs au problème $3x+1$ et diverses variantes.

R. E. Crandall \cite{Cra78} a avancé un argument heuristique basé sur l'idée de promenade aléatoire : si l'on considère uniquement la sous-suite des itérés impairs d'un entier $n$ assez grand, on s'attend à ce que l'ensemble des rapports possibles entre deux termes successifs impairs, à savoir 3/2, 3/4, 3/8, $\ldots$, aient pour probabilités respectives les valeurs 1/2, 1/4, 1/8, $\ldots$. On obtient comme rapport moyen la valeur $3/4$. Ceci découle de l'égalité
\begin{equation}
\left( \frac{3}{2}\right)^{\frac{1}{2}} \cdot \left( \frac{3}{4}\right)^{\frac{1}{4}} \cdot \left( \frac{3}{8}\right)^{\frac{1}{8}} \cdots = \frac{3}{4}.
\end{equation}
Cet argument plaide fortement en faveur de la conjecture \ref{conj_T1}.

Dans le cadre de notre étude, nous appellerons \textit{vitesse moyenne} d'une séquence finie $\left\lbrace  n, T(n), \cdots, T^{k}(n) \right\rbrace $ la quantité $\left( T^{k}(n)/n \right) ^{1/k}$.

Un raisonnement analogue \cite{Bar98} à celui de Crandall suggère que la vitesse moyenne d'une séquence arbitraire non-cyclique a statistiquement une valeur proche de $\sqrt{3}/2 \simeq 0.866 \ldots$, moyenne géométrique de $1/2$ et $3/2$. En effet, la croissance d'une séquence dépend principalement de la parité des itérés successifs. Or, on s'attend à ce que les parités soient équiréparties sur un grand nombre d'itérations.

Ainsi le temps de vol $k$ d'un entier $n$ serait tel que $(1/n)^{1/k} \approx \sqrt{3}/2$ et l'on obtiendrait la valeur moyenne 
 $$ k \approx \frac{2\ln n}{\ln \left( \frac{4}{3}\right)} $$
en l'absence de cycle \cite[p. 7]{Lag10}. 

Ces estimations sont confortées par les calculs numériques. Il semble donc qu'un tel raisonnement permette de saisir l'essentiel de la dynamique asymptotique du problème $3x+1$.\\

\section{Extension sur les réels positifs}
Une approche possible du problème $3x+1$ est de sortir du cadre discret et d'étendre $T$ par une fonction analytique sur l'ensemble des nombres réels \cite{Cha96} ou complexes \cite{Rei03, Sch99}. Nous opterons pour l'extension réelle\footnote{Le deuxième auteur (O. Rozier) avait antérieurement suggéré l'étude de l'extension \eqref{eq_ext} dans le plan complexe et obtenu des représentations graphiques des bassins d'attraction \cite{Roz90}.}  qui nous parait la plus naturelle, définie par l'équation \eqref{eq_ext} ci-après, et nous expliciterons les liens étroits qu'entretiennent la dynamique sur les réels et le problème $3x+1$.

Chamberland \cite{Cha96} a étudié la dynamique sur la demi-droite $\mathbb{R}^{+}$ de la fonction analytique 
\begin{equation}\label{eq_ext}
f(x) := x+\frac{1}{4}-\left(\frac{x}{2}+\frac{1}{4}\right)\cos(\pi x)
\end{equation}
qui vérifie $f(n) = T(n)$ pour tout entier $n$, et $f\left(\mathbb{R}^{+}\right) = \mathbb{R}^{+}$ . Il a ainsi obtenu plusieurs résultats significatifs :

\begin{align}
\label{prop_cycles}
& \text{Le point fixe 0 est attractif ainsi que les cycles $\A_{1} := \lbrace 1,2 \rbrace$ et}\\
& \text{$\A_{2} := \lbrace 1.192\ldots,  2.138\ldots\rbrace$ de période 2.} \notag \\
\label{prop_schwartz}
& \text{La dérivée \textit{Schwartzienne} de $f$ est négative sur $\mathbb{R}^{+}$}. \\
\label{prop_int_inv}
& \text{Les intervalles $[0, \mu_{1}]$ et $[\mu_{1}, \mu_{3}]$ sont invariants par $f$, où}  \\ 
& \text{$\mu_{1}=0.277\ldots$ et $\mu_{3}=2.445\ldots$ sont des points fixes répulsifs.} \notag \\
\label{prop_int_cycles}
& \text{Tout cycle d'entiers positifs est attractif.} \\
\label{prop_unbound}
& \text{Il existe des orbites monotones non-bornées sur $\mathbb{R}^{+}$.}
\end{align}

Par ailleurs, il énonce la conjecture ``Stable Set" \cite{Cha96} ci-dessous :
\begin{conjecture}\label{conj_cycles} \textbf{Cycles attractifs sur $\mathbb{R}^{+}$}\\
La fonction $f$ n'admet aucun cycle attractif sur l'intervalle $[\mu_{3}, +\infty)$.
\end{conjecture}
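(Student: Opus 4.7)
Le plan consiste à réduire la conjecture à un énoncé sur les orbites directes des points critiques de $f$, en combinant la propriété de dérivée schwartzienne négative \eqref{prop_schwartz} avec le théorème de Singer. Rappelons que ce théorème affirme que, pour une application $C^{3}$ d'un intervalle dont la dérivée schwartzienne est partout strictement négative, le bassin immédiat de toute orbite périodique attractive contient soit un point critique de $f$, soit un point de bord de l'intervalle. Comme le seul point de bord de $\mathbb{R}^{+}$ est $0$, tout cycle attractif hypothétique inclus dans $[\mu_{3}, +\infty)$, disjoint de ce bord, doit nécessairement attirer l'orbite directe d'au moins un point critique de $f$.

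La première étape serait alors de localiser ces points critiques. La dérivation de \eqref{eq_ext} donne
\begin{equation*}
f'(x) = 1 - \tfrac{1}{2}\cos(\pi x) + \pi \left(\tfrac{x}{2} + \tfrac{1}{4}\right) \sin(\pi x),
\end{equation*}
et une analyse directe montre que pour chaque entier $n \geq 1$ il existe un unique point critique $c_{n}$ dans un voisinage de $n$ de taille tendant vers zéro : les $c_{2m}$ sont des minima locaux avec $f(c_{2m}) \approx m$, et les $c_{2m+1}$ des maxima locaux avec $f(c_{2m+1}) \approx 3m+2$. La conjecture se réduit alors à démontrer que, pour tout $n \geq 1$, l'orbite $\{f^{k}(c_{n})\}_{k \geq 0}$ soit diverge vers $+\infty$ (comme l'autorise \eqref{prop_unbound}), soit pénètre finalement dans l'ensemble invariant $[0, \mu_{3}]$ fourni par \eqref{prop_int_inv}, dont les seuls attracteurs connus sont $\{0\}$, $\A_{1}$ et $\A_{2}$.

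Pour chaque $n$ fixé, il s'agit d'une vérification finie que l'arithmétique d'intervalles peut en principe certifier : on itère $f$ à partir d'un encadrement garanti de $c_{n}$ jusqu'à ce que l'orbite entre dans $[0, \mu_{3}]$. Pour les petits $n$ cela est parfaitement réalisable, et la contraction $c_{2m} \mapsto m$ suggère que la plupart des orbites critiques atteignent $[0, \mu_{3}]$ en $O(\log n)$ itérations. L'obstacle principal est le traitement \emph{uniforme} en $n$ : il y a une infinité de points critiques, et la vitesse moyenne $\sqrt{3}/2$ rappelée dans l'introduction n'est que statistique. Aucune estimée élémentaire n'interdit qu'un certain $c_{n}$ ait une orbite stationnant arbitrairement longtemps dans $[\mu_{3}, +\infty)$ avant de s'accumuler sur un attracteur exotique. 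Une preuve complète exigerait soit une fonction de Lyapunov sur $[\mu_{3}, +\infty)$ quantifiant cette contraction, soit un argument de rigidité globale excluant toute accumulation de l'ensemble post-critique sur une orbite périodique en dehors de $[0, \mu_{3}]$. L'orbite de $c_{n}$ suivant essentiellement la $T$-orbite d'un entier proche de $n$, un tel contrôle uniforme paraît aussi difficile que la conjecture $3x+1$ elle-même, ce qui explique sans doute que Chamberland laisse l'énoncé à l'état de conjecture.
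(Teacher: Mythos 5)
L'énoncé que l'on vous a soumis est une \emph{conjecture} (la conjecture ``Stable Set'' de Chamberland), et le papier n'en donne aucune démonstration : il se contente d'observer qu'elle entraînerait l'absence de cycles non-triviaux pour $T$ via la propriété \eqref{prop_int_cycles}. Votre proposition ne prétend pas non plus la démontrer, et c'est la bonne réponse : vous identifiez correctement la réduction standard --- dérivée schwartzienne négative \eqref{prop_schwartz} plus théorème de Singer, donc tout cycle attractif dans $[\mu_{3},+\infty)$ devrait capturer l'orbite d'un point critique $c_{n}$ --- puis vous reconnaissez honnêtement que le contrôle uniforme en $n$ de ces orbites critiques est hors de portée. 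Cette réduction est exactement celle que le papier exploite au début de la section 3 pour motiver la conjecture ``Critical Points'' (conjecture \ref{conj_critical}), et votre diagnostic final (le problème est essentiellement aussi difficile que la conjecture $3x+1$ elle-même, puisque l'orbite de $c_{n}$ suit celle de l'entier $n$) est cohérent avec le théorème \ref{th_odd_critical} et son corollaire, qui établissent l'équivalence logique entre la dynamique des points critiques impairs et le problème $3x+1$. Deux précisions mineures : d'une part $[\mu_{3},+\infty)$ n'est pas invariant par $f$ (les orbites entières retombent dans $[\mu_{1},\mu_{3}]$), donc l'application de Singer doit se faire sur $\mathbb{R}^{+}$ tout entier, ce qui ne change rien à votre conclusion puisque le bord $0$ est déjà un point fixe attractif ; d'autre part la localisation fine des $c_{n}$ que vous esquissez est précisément l'objet du lemme \ref{lem_critic} du papier. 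En résumé, il n'y a pas de lacune dans votre texte au sens où il affirmerait plus qu'il ne prouve : il décrit correctement pourquoi l'énoncé reste une conjecture.
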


Une conséquence immédiate de \eqref{prop_int_cycles} est que la conjecture \ref{conj_cycles} entraîne la conjecture \ref{conj_T2} du problème $3x+1$.\\

Puis, il définit l'ensemble des orbites non-bornées
\begin{equation}
U^{\infty}_{f} := \left\lbrace x \in \mathbb{R}^{+} : \limsup_{k \rightarrow \infty} f^{k}(x) = \infty \right\rbrace.
\end{equation}
Le résultat \eqref{prop_unbound} prouve que $U^{\infty}_{f}$ est infini, et l'on démontre que $U^{\infty}_{f}$ contient un ensemble de Cantor dans chaque intervalle $[n,n+1]$ pour tout entier $n \geq 2$ \cite{Sch99}. Il suit que $U^{\infty}_{f}$ n'est pas dénombrable.

\begin{conjecture}\label{conj_unbound} \textbf{Orbites non-bornées sur $\mathbb{R}^{+}$}\\
L'ensemble $U^{\infty}_{f}$ est d'intérieur vide.
\end{conjecture}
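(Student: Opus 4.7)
Le plan est de raisonner par l'absurde en supposant que $U^{\infty}_{f}$ contient un intervalle ouvert non vide $J$, puis d'aboutir \`{a} une contradiction en exploitant la n\'{e}gativit\'{e} de la d\'{e}riv\'{e}e Schwarzienne \eqref{prop_schwartz}. Gr\^{a}ce \`{a} \eqref{prop_int_inv}, toute orbite issue de $[0,\mu_{3}]$ reste born\'{e}e, on peut donc se restreindre au cas $J \subset (\mu_{3},+\infty)$. On consid\`{e}re alors la suite des it\'{e}r\'{e}s $J_{k} := f^{k}(J)$, qui sont encore des intervalles par continuit\'{e} de $f$.

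Deux sc\'{e}narios se pr\'{e}sentent. \emph{Premier sc\'{e}nario :} les $J_{k}$ sont deux \`{a} deux disjoints \`{a} partir d'un certain rang, autrement dit $J$ est un \textit{intervalle errant}. On ferait alors appel au th\'{e}or\`{e}me classique de Guckenheimer, raffin\'{e} par Martens, de Melo et van Strien, qui interdit les intervalles errants pour une application lisse de l'intervalle \`{a} d\'{e}riv\'{e}e Schwarzienne n\'{e}gative et points critiques non d\'{e}g\'{e}n\'{e}r\'{e}s. \emph{Second sc\'{e}nario :} il existe $k < \ell$ tels que $J_{k} \cap J_{\ell} \neq \emptyset$. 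L'application $f^{\ell-k}$ envoie alors une portion de $J_{k}$ dans $J_{k}$, ce qui par une analyse standard force la pr\'{e}sence d'un cycle p\'{e}riodique attractif dont le bassin contient $J$. Le th\'{e}or\`{e}me de Singer garantit alors que ce bassin capture un point critique de $f$, mais tout point du bassin a une orbite born\'{e}e, contredisant $J \subset U^{\infty}_{f}$. Sous la conjecture \ref{conj_cycles}, ce second sc\'{e}nario est d'ailleurs automatiquement vide pour $J \subset [\mu_{3},+\infty)$.

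L'obstacle principal se concentre dans le premier sc\'{e}nario : les r\'{e}sultats classiques d'absence d'intervalles errants sont formul\'{e}s sur des intervalles compacts, tandis que l'on travaille ici sur le demi-axe et que $f$ poss\`{e}de une infinit\'{e} de points critiques s'accumulant \`{a} l'infini. Une voie serait d'\'{e}tablir que les $J_{k}$ doivent repasser infiniment souvent par une r\'{e}gion born\'{e}e --- en exploitant par exemple la r\'{e}partition uniforme des points critiques entre entiers cons\'{e}cutifs et les pr\'{e}images des points fixes r\'{e}pulsifs associ\'{e}s aux entiers pairs --- de mani\`{e}re \`{a} se ramener au cadre compact usuel. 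Une autre approche consisterait \`{a} d\'{e}montrer un principe de distorsion de type Koebe directement adapt\'{e} \`{a} l'it\'{e}ration de $f$ sur des voisinages bien choisis des $J_{k}$, en tirant parti de la structure p\'{e}riodique de $f$ modulo $2$ pour uniformiser le contr\^{o}le des distorsions d'une fen\^{e}tre \`{a} la suivante.
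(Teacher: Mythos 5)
Ce que vous tentez de d\'{e}montrer est la conjecture \ref{conj_unbound} elle-m\^{e}me : l'article ne la prouve pas, il l'\'{e}nonce comme conjecture ouverte (forme faible de la conjecture \og Unstable Set \fg{} de Chamberland) et montre au th\'{e}or\`{e}me \ref{th_unbound} qu'elle entra\^{i}ne la conjecture \ref{conj_T1}, c'est-\`{a}-dire l'absence de trajectoires divergentes du probl\`{e}me $3x+1$. Une preuve compl\`{e}te de l'\'{e}nonc\'{e} r\'{e}soudrait donc une moiti\'{e} du probl\`{e}me $3x+1$ ; il faut par cons\'{e}quent s'attendre \`{a} ce qu'un argument de dynamique r\'{e}elle standard ne suffise pas, et c'est pr\'{e}cis\'{e}ment ce qui se produit dans votre proposition. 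Vous identifiez vous-m\^{e}me l'obstacle du premier sc\'{e}nario, mais il ne s'agit pas d'une difficult\'{e} technique que l'on pourrait lever par un raffinement de Koebe : le th\'{e}or\`{e}me \ref{th_wander}(b) du texte montre que si un entier $n$ a une orbite divergente, alors les intervalles $f^{i}\bigl( J_{n}^{7/2} \bigr) \subset J_{f^{i}(n)}^{7/2}$ forment une authentique famille d'intervalles errants qui s'\'{e}chappe vers l'infini. Exclure les intervalles errants non born\'{e}s est donc logiquement au moins aussi fort que la conjecture \ref{conj_T1} ; les th\'{e}or\`{e}mes de Guckenheimer, Martens--de Melo--van Strien ne s'appliquent pas ici, non seulement parce que le domaine n'est pas compact, mais parce que l'hypoth\`{e}se d'un nombre fini de points critiques est essentielle \`{a} leurs preuves (contr\^{o}le de distorsion le long d'orbites qui reviennent dans un compact), et $f$ en poss\`{e}de une infinit\'{e} s'accumulant \`{a} l'infini.

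Le second sc\'{e}nario comporte aussi un saut injustifi\'{e}. De $J_{k} \cap J_{\ell} \neq \emptyset$ avec $p = \ell - k$, on d\'{e}duit seulement que $K := \bigcup_{m \geq 0} f^{mp}(J_{k})$ est un intervalle v\'{e}rifiant $f^{p}(K) \subset K$ ; cela ne produit un cycle attractif, ni m\^{e}me une orbite born\'{e}e, que si $K$ est born\'{e}. Or rien n'interdit a priori que $K$ soit une demi-droite du type $[c, +\infty)$, ce qui est parfaitement compatible avec $J \subset U^{\infty}_{f}$ (la d\'{e}finition de $U^{\infty}_{f}$ ne demande qu'une limite sup\'{e}rieure infinie). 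L'appel au th\'{e}or\`{e}me de Singer ne r\`{e}gle rien dans ce cas, l\`{a} encore faute de compacit\'{e} et \`{a} cause de l'infinit\'{e} de points critiques. En l'\'{e}tat, votre texte est un programme de preuve dont les deux branches restent ouvertes, et non une d\'{e}monstration ; la seule partie solide (la r\'{e}duction \`{a} $J \subset (\mu_{3}, +\infty)$ via \eqref{prop_int_inv} et la dichotomie errant / r\'{e}current) recoupe l'analyse que l'article m\`{e}ne au th\'{e}or\`{e}me \ref{th_wander}, mais dans la direction inverse : en d\'{e}duire des cons\'{e}quences de la conjecture, non la d\'{e}montrer.
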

La conjecture \ref{conj_unbound} est une formulation faible de la conjecture ``Unstable Set" \cite{Cha96}. Nous allons montrer qu'elle a des liens logiques avec le problème $3x+1$.

\begin{lemme}\label{lem_critic}
Soit $\left\lbrace c_{n} \right\rbrace _{n=0}^{\infty}$ l'ensemble des points critiques de $f$ dans $\mathbb{R}^{+}$, ordonnés de telle sorte que $ 0 < c_1{} < c_{2} < \ldots $.\\
Alors on a
$$ n - \frac{1}{\pi^{2}n} < c_{n} < n  \text{, si $n$ est pair;}$$
$$ n < c_{n} < n + \frac{3}{\pi^{2}n} \text{, si $n$ est impair.}$$
\end{lemme}
\begin{proof}(indications)
Soit $n$ un entier positif. 
On a $$f'(x) = 1 - \frac{1}{2} \cos(\pi x) + \pi \left(\frac{x}{2}+\frac{1}{4}\right)\sin(\pi x)$$
et on vérifie facilement que 
$n - \frac{1}{2} < c_{n} < n$ si $n$ est pair, et
$n < c_{n} < n + \frac{1}{2}$ si $n$ est impair.

De plus, on a toujours $f'(n) > 0$ et on montre que
$$f'\left( n-\frac{1}{\pi^{2}n}\right) < \frac{ \left( 20-6\pi^{2}n\right) n+1}{24\pi^{2}n^{3}} < 0 \text{, si $n$ est pair,}$$
$$f'\left( n + \frac{3}{\pi^{2}n}\right) < \frac{\left( 18-6\pi^{2}n\right) n+9}{8\pi^{2}n^{3}} < 0 \text{, si $n$ est impair,}$$
en utilisant les encadrements $ 1 - \frac{t^{2}}{2} < \cos t  < 1$ et $ t - \frac{t^{3}}{6} < \sin t < t$ pour $0 < t < 1$.
\end{proof}

\begin{lemme}\label{lem_stable}
On considère la famille d'intervalles $J^{a}_{n} := \left[ n , n + \frac{a}{\pi^{2}n} \right]$ pour tout entier $n > 0$ et tout réel $a$ tel que $\frac{27}{8} < a < 6$. \\
Alors on a $ f \left( J^{a}_{n} \right) \subset J^{a}_{f(n)}$ pour tout entier $n$ assez grand.\\
Si de plus $a=\frac{7}{2}$, alors l'inclusion est vraie pour tout $n>0$.
\end{lemme}
\begin{proof}
Soit un entier $n>0$ et un réel $a$ tel que $\frac{27}{8} < a < 6$.\\

1\ier{} cas : $n$ est pair, $f(n) = \frac{n}{2}$ et $f$ est croissante sur $J^{a}_{n}$.
On vérifie alors que 
$$ f\left( n + \frac{a}{\pi^{2}n}\right) \leq f(n) + \frac{a}{\pi^{2}f(n)} + A \cdot B$$
avec $$A = \frac{a}{8\pi^{4}n^{3}} \text{ et } B = \pi^{2}n\left( 2\left( a - 6 \right) n + a \right)  + 2a^{2}$$
en utilisant l'inégalité $1 - \cos t  < \frac{t^{2}}{2}$ pour $0 < t < 1$. Comme $a - 6 < 0$, il est clair que $A \cdot B <0$ pour $n$ suffisamment grand.\\

Si de plus $a = \frac{7}{2}$, alors $B \leq \frac{49}{2}-13\pi^{2} < 0$ pour tout $n$.\\

2\ieme{} cas : $n$ est impair, $f(n) = \frac{3n+1}{2}$ et $f$ est croissante sur $\left[ n, c_{n} \right]$ et décroissante sur $\left[ c_{n}, n + \frac{a}{\pi^{2}n} \right]$.
On vérifie alors que
$$ f\left( n + \frac{a}{\pi^{2}n}\right) \geq f(n) - A \cdot B$$
$A$ et $B$ étant défini comme précédemment, donc $A \cdot B < 0$ pour $n$ suffisamment grand. Si de plus $a = \frac{7}{2}$, alors $A \cdot B < 0$ pour tout $n \geq 3$, et dans le cas $n=1$, on a 
$$f\left( 1 + \frac{7}{2 \pi^{2}} \right) = 2.013 \ldots > f(1).$$

D'après le lemme \ref{lem_critic}, on a $c_{n} = n + \frac{b}{\pi^{2}n}$ avec $0 < b < 3$. Il vient
$$f(c_{n}) - f(n) \leq \frac{3b}{2\pi^{2}n} - \frac{n}{2}\left( 1- \cos\left( \frac{b}{\pi n} \right) \right)$$
puis en utilisant l'inégalité $1 - \cos t > \frac{t^{2}}{2} - \frac{t^{4}}{24}$ pour $0 < t < 1$,
$$f(c_{n}) - f(n) < \frac{b(6 - b)}{4\pi^{2}n} + \frac{b^{4}}{48\pi^{4}n^{3}} \leq \frac{9}{4\pi^{2}n} + \frac{27}{16\pi^{4}n^{3}}.$$

On obtient
$$ f(c_{n}) < f(n) + \frac{a}{\pi^{2}f(n)} + \frac{C}{D}$$
avec 
$$C = 4\pi^{2}n^{2}\left( (27-8a)n+9\right) + 81n + 27 \text{ et } D = 16\pi^{4}n^{3}(3n+1).$$ On voit que $C < 0$ pour $n$ suffisamment grand. Si de plus $a = \frac{7}{2}$ et $n \geq 11$, on a alors 
$$C = 4\pi^{2}n^{2}(9-n) + 81n + 27 < 0$$
 et dans les cas où $n$ = 1, 3, 5, 7 ou 9, on vérifie numériquement que 
 $$f(c_{n}) - f(n) - \frac{7}{(3n+1)\pi^{2}} < 0$$
 en utilisant les valeurs $c_{1} = 1.180938\ldots$, $c_{3} = 3.084794...$, $c_{5} = 5.054721...$, $c_{7} = 7.040311...$ et $c_{9} = 9.031889...$.

\end{proof}

On déduit du lemme \ref{lem_stable} un lien logique entre les conjectures \ref{conj_T1} et \ref{conj_unbound} :
\begin{theoreme}\label{th_unbound}
La conjecture \ref{conj_unbound} implique la conjecture \ref{conj_T1} (absence d'orbites non-bornées) du problème $3x+1$ .
\end{theoreme}

\begin{proof}
Supposons que la conjecture \ref{conj_unbound} soit vraie et que la conjecture \ref{conj_T1} soit fausse. Alors il  existe un entier positif $n_{0}$ tel que $\limsup_{k \rightarrow \infty} f^{k}(n_{0}) = \infty$. D'après le lemme \ref{lem_stable}, une simple récurrence donne $f^{k}\left( J^{\frac{7}{2}}_{n_{0}} \right) \subset J^{\frac{7}{2}}_{f^{k}(n_{0})}$ pour tout entier $k \geq 0$. Donc l'ensemble $U^{\infty}_{f}$ contient l'intervalle $J^{\frac{7}{2}}_{n_{0}}$, ce qui est en contradication avec notre hypothèse que $U^{\infty}_{f}$ soit d'intérieur vide.
\end{proof}

\section{Dynamique des points critiques}
\label{critic}

Les résultats \eqref{prop_schwartz} et \eqref{prop_int_cycles} entrainent que le bassin d'attraction immédiat de tout cycle d'entiers strictement positifs contient au moins un point critique \cite{Cha96}. Pour cette raison, Chamberland a effectué des calculs numériques relatifs aux orbites des points critiques $c_{n}$ pour $n \leq 1000$. Il énonce la conjecture ``Critical Points" ci-dessous :
\begin{conjecture}\label{conj_critical} \textbf{Points critiques}\\
Tous les points critiques $c_{n}$, $n>0$, sont attirés par l'un des cycles $\A_{1} $ ou $\A_{2}$.
\end{conjecture}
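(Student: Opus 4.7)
La strat�gie est de ramener la dynamique de chaque point critique $c_n$ � celle des it�r�s entiers sous $T$, gr�ce au lemme \ref{lem_stable} qui confine les orbites dans des intervalles centr�s sur les orbites enti�res, puis de conclure conditionnellement � la conjecture \ref{conj_T} du probl�me $3x+1$.

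Dans un premier temps, je compl�terais le lemme \ref{lem_stable} par une variante sym�trique portant sur les intervalles $K_n^{a} := \left[n - \frac{a}{\pi^2 n}, n\right]$, �tablie par des calculs analogues exploitant la structure unimodale de $f$ autour de chaque entier pair (o� $c_n$ est un minimum local). D'apr�s le lemme \ref{lem_critic}, tout point critique $c_n$ est contenu soit dans $J_n^{7/2}$ (si $n$ est impair), soit dans $K_n^{7/2}$ (si $n$ est pair). Une r�currence sur $k$ donnerait alors $f^k(c_n) \in J_{T^k(n)}^{7/2} \cup K_{T^k(n)}^{7/2}$ pour tout $k \geq 0$, intervalles dont la longueur d�cro�t comme $O\!\left(1/T^k(n)\right)$.

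Sous l'hypoth�se de la conjecture \ref{conj_T}, il existerait $k_0$ tel que $T^{k_0}(n) \in \{1,2\}$, si bien que $f^k(c_n)$ se trouverait confin�, pour $k \geq k_0$, dans un voisinage arbitrairement petit du cycle $\{1, 2\}$. Il resterait alors � prouver qu'un tel voisinage est enti�rement inclus dans la r�union des bassins d'attraction de $\A_1$ et $\A_2$. Cette analyse locale exploiterait la n�gativit� de la d�riv�e Schwartzienne \eqref{prop_schwartz} et la structure des bassins imm�diats dans l'intervalle invariant $[0, \mu_3]$ (cf. \eqref{prop_int_inv}); les petits entiers $n$ pour lesquels l'encadrement asymptotique serait trop grossier seraient trait�s par calcul num�rique direct, dans la continuit� des v�rifications de Chamberland.

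Le principal obstacle � une preuve compl�te est double. D'une part, la pr�sente approche montre en r�alit� que la conjecture \ref{conj_critical} implique la conjecture \ref{conj_T1}: toute orbite enti�re non-born�e donnerait, par le m�me argument de confinement, une orbite critique non-born�e, donc non-attir�e par $\A_1$ ou $\A_2$; aucune preuve inconditionnelle ne semble envisageable par cette voie. D'autre part, l'�tape finale d'analyse locale requiert d'exclure au voisinage de $\{1, 2\}$ tout ensemble invariant non-trivial (cycle r�pulsif additionnel, ensemble de Cantor invariant) susceptible de capturer l'orbite critique, question qui recoupe la conjecture \ref{conj_cycles} et reste donc le c\oe ur de la difficult�.
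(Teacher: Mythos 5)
Il faut d'abord rappeler que l'\'enonc\'e vis\'e est une \emph{conjecture} ouverte (la conjecture ``Critical Points'' de Chamberland) : l'article n'en propose aucune d\'emonstration, mais seulement une v\'erification num\'erique pour $n \leq 2000$ et un r\'esultat partiel, le th\'eor\`eme \ref{th_odd_critical} (si $n \geq 7$ est impair et si l'orbite de $n$ contient 1, alors $c_n$ est attir\'e par $\A_{1}$), dont d\'ecoule l'\'equivalence entre la conjecture \ref{conj_odd_critical} et le probl\`eme $3x+1$. Votre esquisse, pour les $n$ impairs, reprend exactement cette strat\'egie (confinement de $c_n$ dans $J_n^{7/2}$ par les lemmes \ref{lem_critic} et \ref{lem_stable}, puis analyse locale au voisinage de $\lbrace 1,2 \rbrace$), et votre observation que la conjecture \ref{conj_critical} entra\^{\i}nerait la conjecture \ref{conj_T1} est correcte et coh\'erente avec la figure \ref{fig: conjectures}.

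La lacune r\'eelle concerne les $n$ pairs. Le lemme sym\'etrique que vous proposez pour $K_n^{a} := \left[ n - \frac{a}{\pi^{2}n},\, n \right]$ ne peut pas \^etre it\'er\'e : \`a gauche d'un entier \emph{impair} $m$ il n'y a pas de point critique (le maximum local $c_m$ est \`a droite de $m$ d'apr\`es le lemme \ref{lem_critic}), donc aucun repliement ne vient comprimer l'image ; la d\'eriv\'ee de $f$ vaut $3/2$ en $m$ et cro\^{\i}t encore sur le reste de la bande, de sorte que $f\left( K_m^{a}\right)$ est dilat\'e d'un facteur au moins $3/2$ alors que la bande cible $K_{f(m)}^{a}$ autour de $f(m)=(3m+1)/2$ est environ $3/2$ fois plus courte que $K_m^{a}$. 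Chaque it\'er\'e impair d\'egrade donc l'inclusion d'un facteur voisin de $9/4$, et une suite d'it\'er\'es impairs cons\'ecutifs expulse l'orbite de toute bande de largeur $O(1/n)$. C'est pr\'ecis\'ement ce que constatent les exp\'eriences num\'eriques de l'article : pour $n \equiv -2 \pmod{64}$ et pour $n = 54, 334, 338, \ldots$, l'orbite de $c_n$ cesse d'\^etre proche de celle de $n$, et plusieurs $c_n$ d'ordre pair ($n = 382, 496, \ldots$) sont attir\'es par $\A_{2}$ plut\^ot que par $\A_{1}$. C'est la raison pour laquelle l'article ne formule la propri\'et\'e de suivi des orbites enti\`eres que pour les points critiques d'ordre impair (conjecture \ref{conj_odd_critical}) et laisse le cas pair \`a l'\'etat d'observation exp\'erimentale. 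Votre approche ne peut donc pas \'etablir la conjecture \ref{conj_critical}, m\^eme conditionnellement au probl\`eme $3x+1$ ; elle se ram\`ene au mieux au th\'eor\`eme \ref{th_odd_critical} d\'ej\`a d\'emontr\'e.
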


Nous complétons ici les résultats numériques de Chamberland. Une précision de 1500 chiffres décimaux en virgule flottante est requise pour le calcul de certaines orbites ($c_{646}$ par exemple). Nous avons vérifié nos résultats avec deux logiciels différents, Mathematica et Maple.

D'après nos calculs, les cycles $\A_{1}$ et $\A_{2} $ attirent tous les points critiques $c_{n}$ pour $n \leq 2000$. Plus précisément, $c_{n}$ est attiré par $\A_{2}$ pour \footnote{En gras les valeurs déjà obtenues par Chamberland.} $n$ = \textbf{1}, \textbf{3}, \textbf{5}, 382, 496, \textbf{502}, 504, 508, 530, 550, 644, 646, \textbf{656}, 666, 754, 830, 874, 1078, 1150, 1214, 1534, 1590, 1598, 1614, 1662, 1854, et  par $\A_{1} $ pour toutes les autres valeurs de $n \leq 2000$. Nous avons observé que l'orbite de $c_{n}$ est toujours proche de l'orbite de $n$, sauf pour $n \equiv -2 \pmod{64}$ et pour $n$=54, 334, 338, 366, 390, 442, 444, 470, 484, 486, 496, 500, $\ldots$.

Les résultats numériques suggèrent la conjecture suivante\footnote{Dans \cite{Rei03}, une conjecture analogue avec davantage d'hypothèses est formulée relativement à une autre extension de la fonction $T$ sur les réels.} :

\begin{conjecture}\label{conj_odd_critical} \textbf{Points critiques d'ordre impair}\\
Les points critiques $c_{n}$ sont attirés par le cycle $\A_{1} = \left\lbrace 1, 2 \right\rbrace $ pour tout entier $n \geq 7$ impair.
\end{conjecture}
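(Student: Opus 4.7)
Le plan est de montrer que, pour tout entier $n$ impair, l'orbite r\'eelle de $c_n$ sous $f$ ``suit'' l'orbite enti\`ere $\{T^k(n)\}$, puis d'invoquer le probl\`eme $3x+1$ pour en d\'eduire qu'elle finit pr\`es du cycle $\A_1$. Concr\`etement, le lemme \ref{lem_critic} donne $c_n \in (n, n + 3/(\pi^2 n)] \subset J^{7/2}_n$, et une r\'ecurrence imm\'ediate bas\'ee sur le lemme \ref{lem_stable} (avec $a = 7/2$) fournit $f^k(c_n) \in J^{7/2}_{T^k(n)}$ pour tout $k \geq 0$. Sous la conjecture \ref{conj_T}, il existe $k_0$ tel que $T^k(n) \in \{1, 2\}$ pour tout $k \geq k_0$, et l'orbite de $c_n$ est alors contenue dans $J^{7/2}_1 \cup J^{7/2}_2 \simeq [1, 1.355] \cup [2, 2.177]$.

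L'obstacle principal est que cet ensemble terminal contient d\'ej\`a le cycle concurrent $\A_2 \simeq \{1.192, 2.138\}$, de sorte que le lemme \ref{lem_stable} seul ne permet pas de distinguer les deux bassins d'attraction. Pour y rem\'edier, je raffinerais l'estimation interm\'ediaire de la preuve du lemme \ref{lem_stable}, qui donne $f(c_n) - f(n) < 9/(4\pi^2 n) + O(n^{-3})$, afin de borner l'\'ecart $|f^k(c_n) - T^k(n)|$ tout au long de l'orbite. Un calcul direct montre que $f'(n) = 1/2$ aux entiers pairs et $f'(n) = 3/2$ aux entiers impairs : la moyenne g\'eom\'etrique $\sqrt{3}/2 < 1$ sugg\`ere une contractivit\'e globale qui, combin\'ee \`a l'estim\'ee initiale $c_n - n \leq 3/(7\pi^2) \simeq 0.043$ pour $n \geq 7$, devrait localiser $f^{k_0}(c_n)$ tr\`es pr\`es d'un \'el\'ement de $\A_1$, donc dans son bassin imm\'ediat.

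La difficult\'e essentielle sera de transformer cet argument de ``contractivit\'e en moyenne'' en une in\'egalit\'e rigoureuse, valable uniform\'ement pour tout $n \geq 7$ impair : les passages par des entiers impairs interm\'ediaires dilatent temporairement l'\'ecart, et il faudra contr\^oler cumulativement ces effets, probablement via une analyse de $f^2$ sur $J^{7/2}_1 \cup J^{7/2}_2$ et une localisation explicite de la fronti\`ere entre les bassins de $\A_1$ et $\A_2$ dans $[\mu_1, \mu_3]$. L'exclusion des cas $n = 1, 3, 5$ refl\`ete le fait, observ\'e num\'eriquement par Chamberland, que $c_1, c_3, c_5$ se trouvent pr\'ecis\'ement dans le bassin de $\A_2$, leur proximit\'e imm\'ediate avec ce cycle les emp\^echant d'\^etre captur\'es par $\A_1$. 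Enfin, toute d\'emonstration compl\`ete restera conditionnelle au probl\`eme $3x+1$, puisque rien n'assure autrement que l'orbite enti\`ere de $n$ atteigne le cycle $\{1, 2\}$.
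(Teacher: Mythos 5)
Notez d'abord que l'\'enonc\'e vis\'e est une \emph{conjecture} dans l'article : il n'en existe pas de d\'emonstration, et le corollaire qui suit le th\'eor\`eme \ref{th_odd_critical} montre qu'elle est logiquement \'equivalente au probl\`eme $3x+1$ lui-m\^eme. Ce que l'article d\'emontre effectivement est la version conditionnelle (th\'eor\`eme \ref{th_odd_critical}) : si $n \geq 7$ est impair et si l'orbite de $n$ contient $1$, alors $c_n$ est attir\'e par $\A_1$. Votre proposition vise, \`a juste titre, cette version conditionnelle, et sa premi\`ere \'etape co\"incide avec celle de l'article : $c_n \in J^{7/2}_n$ (lemme \ref{lem_critic}), puis $f^k(c_n) \in J^{7/2}_{T^k(n)}$ par r\'ecurrence (lemme \ref{lem_stable} avec $a = 7/2$). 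Vous identifiez aussi correctement l'obstacle : savoir que l'orbite aboutit dans $J^{7/2}_1 = \left[ 1, 1 + \frac{7}{2\pi^2} \right] \simeq [1, 1.355]$ ne suffit pas, puisque cet intervalle contient \`a la fois la fronti\`ere $x_1 = 1.023686\ldots$ du bassin imm\'ediat de $\A_1$ et le point $1.192\ldots$ du cycle concurrent $\A_2$.

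C'est dans la mani\`ere de lever cet obstacle que r\'eside la lacune. Votre argument de ``contractivit\'e en moyenne'' (moyenne g\'eom\'etrique $\sqrt{3}/2$ des d\'eriv\'ees aux entiers) reste heuristique et n'est pas rendu uniforme en $n$ : la majoration fournie par le lemme \ref{lem_stable} \`a l'arriv\'ee, $f^k(c_n) - 1 \leq \frac{7}{2\pi^2} \simeq 0.355$, d\'epasse d'un ordre de grandeur le seuil $x_1 - 1 \simeq 0.024$, et rien dans votre sch\'ema ne garantit que les dilatations aux passages par des entiers impairs n'accumulent pas l'\'ecart au-del\`a de $x_1$. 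L'id\'ee cl\'e de l'article, absente de votre proposition, est combinatoire : la construction de l'arbre inverse (figure \ref{fig: inv_tree}) et un argument modulo 3 (les ant\'ec\'edents de $12$ sont pairs) montrent que tout impair $n \geq 7$ dont l'orbite contient $1$ passe n\'ecessairement par l'un des trois entiers ``portes'' $13$, $16$ ou $40$. Il suffit alors de calculer explicitement les images des trois intervalles $J^{7/2}_{13}$, $J^{7/2}_{16}$ et $J^{7/2}_{40}$ le long des quelques it\'erations restantes (en exploitant l'unimodalit\'e de $f$ sur $J^{7/2}_m$ pour $m$ impair et sa croissance pour $m$ pair) pour constater qu'elles aboutissent respectivement dans $[1, 1.0184\ldots]$, $[1, 1.0227\ldots]$ et $[1, 1.0047\ldots]$, tous strictement inclus dans $[1, x_1)$, donc dans le bassin imm\'ediat de $\A_1$. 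C'est cette r\'eduction \`a un nombre fini de cas v\'erifiables, et non une in\'egalit\'e de contraction globale, qui cl\^ot la d\'emonstration ; sans elle, votre plan reste un programme plut\^ot qu'une preuve.
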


Nous montrons à présent que la conjecture \ref{conj_odd_critical} suffit pour reformuler complètement le problème $3x+1$.

\begin{theoreme}\label{th_odd_critical}
Soit un entier impair $n \geq 7$ dont l'orbite contient 1. Alors le point critique $c_{n}$ est attiré par le cycle $\A_{1}$.
\end{theoreme}
\begin{proof}
Considérons un entier impair $n \geq 7$ dont l'orbite contient 1. La construction de l'arbre des orbites inverses de 1, représenté sur la figure \ref{fig: inv_tree}, montre que l'orbite de $n$ contient l'un des entiers 12, 13, 16 ou 40. On déduit de règles itératives modulo 3 sur les entiers que les antécédents de 12 sont des entiers pairs. Il vient que $f^{k}(n) =$ 13, 16 ou 40 pour un entier $k \geq 0$. Les lemmes \ref{lem_critic} et \ref{lem_stable} entraînent que $c_{n}$  appartient à $J^{\frac{7}{2}}_{n}$ et $f^{k}(c_{n})$ se trouve dans $J^{\frac{7}{2}}_{13} \cup J^{\frac{7}{2}}_{16} \cup J^{\frac{7}{2}}_{40}$.\\

1\ier{} cas : $f^{k}(n) = 13$, $f^{k}(c_{n}) \in J^{\frac{7}{2}}_{13}$. La séquence des itérés de $f^{k}(n)$ est $13 \rightarrow 20 \rightarrow 10 \rightarrow 5 \rightarrow 8 \rightarrow 4 \rightarrow 2 \rightarrow 1$.

Soit $m$ un entier pris dans cette séquence. La fonction $f$ est unimodale sur $J^{\frac{7}{2}}_{m}$ avec un maximum en $c_{m}$ lorsque $m$ est impair, et strictement croissante lorsque $m$ est pair. Ce comportement permet de déterminer les images successives de $J^{\frac{7}{2}}_{13}$ en fonction de $c_{13} = 13.022478\ldots$.
$$f\left(  J^{\frac{7}{2}}_{13} \right) = \left[ 20 , f(c_{13}) \right]$$
$$f^{3}\left(  J^{\frac{7}{2}}_{13} \right) = \left[ 5 , f^{3}(c_{13}) \right]$$
avec $f^{3}(c_{13}) = 5.0249\ldots < c_{5} = 5.0547\ldots$.
$$f^{7}\left(  J^{\frac{7}{2}}_{13} \right) = \left[ 1 , f^{7}(c_{13}) \right]$$ 
avec $f^{7}\left( c_{13}\right)  = 1.0184\ldots$.

De plus la fonction $f^{2}$ est strictement croissante sur l'intervalle $(1, c_{1})$ avec une unique point fixe $x_{1} = 1.023686\ldots$ qui est répulsif. Il suit que l'intervalle $[1, x_{1})$ fait partie du bassin d'attraction immédiat du cycle $\A_{1}$ et que $c_{n}$ est attiré par $\A_{1}$.\\

2\ieme{} cas : $f^{k}(n) = 16$, $f^{k}(c_{n}) \in J^{\frac{7}{2}}_{16}$. On a la séquence $16 \rightarrow 8 \rightarrow 4 \rightarrow 2 \rightarrow 1$.
Comme précédemment, on obtient l'image 
$$f^{4} \left(  J^{\frac{7}{2}}_{16} \right) = \left[ 1 , f^{4}\left( 16 + \frac{7}{32\pi^{2}}\right)  \right]$$
avec $f^{4}\left( 16 + \frac{7}{32\pi^{2}}\right)  = 1.0227\ldots <  x_{1}$. Donc $c_{n}$ est attiré par $\A_{1}$.\\

3\ieme{} cas : $f^{k}(n) = 40$, $f^{k}(c_{n}) \in J^{\frac{7}{2}}_{40}$, et la séquence des itérés est $40 \rightarrow 20 \rightarrow 10 \rightarrow 5 \rightarrow 8 \rightarrow 4 \rightarrow 2 \rightarrow 1$.
De la même manière, on itère les images successives
$$f^{3}\left(  J^{\frac{7}{2}}_{40} \right) = \left[ 5 , f^{3}\left( 40 + \frac{7}{80\pi^{2}}\right) \right] $$ avec $f^{3}\left( 40 + \frac{7}{80\pi^{2}}\right) = 5.0118\ldots < c_{5} = 5.0547\ldots$,
$$f^{7}\left(  J^{\frac{7}{2}}_{40} \right) = \left[ 1 , f^{7}\left( 40 + \frac{7}{80\pi^{2}}\right) \right]$$ avec $f^{7}\left( 40 + \frac{7}{80\pi^{2}}\right) = 1.0047\ldots < x_{1}$. Ainsi $c_{n}$ est attiré par $\A_{1}$ dans tous les cas.
\end{proof}

\begin{remarque}
Dans cette démonstration, il n'est pas possible de fusionner les cas 1 et 3 en partant de l'entier 20 car $f^{6}\left(  J^{\frac{7}{2}}_{20} \right) = \left[ 1 , f^{6}\left( 20 + \frac{7}{40\pi^{2}}\right)  \right] = \left[ 1 , 1.023691\ldots \right]$ n'est pas inclus (de très peu) dans le bassin d'attraction de $\A_{1}$ délimité par $x_{1} = 1.023686\ldots$.
\end{remarque}

\begin{corollaire}
La conjecture \ref{conj_odd_critical} est logiquement équivalente au problème $3x+1$.
\end{corollaire}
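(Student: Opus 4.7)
L'approche est de prouver les deux implications s\'epar\'ement. Le sens ``$3x+1 \Rightarrow$ conjecture \ref{conj_odd_critical}'' est imm\'ediat~: si la conjecture $3x+1$ est vraie, alors tout entier impair $n \geq 7$ a une orbite contenant 1, et le th\'eor\`eme \ref{th_odd_critical} donne directement que $c_n$ est attir\'e par $\A_1$.

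Pour le sens r\'eciproque, je me ram\`enerais d'abord au cas d'un entier impair $n \geq 7$~: pour $n \in \{1,3,5\}$ l'orbite enti\`ere atteint trivialement 1, et tout entier pair positif se ram\`ene, apr\`es un nombre fini de divisions par 2, soit \`a 1, soit \`a un entier impair positif qui est alors dans $\{1,3,5\}$ ou bien sup\'erieur ou \'egal \`a 7.

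Fixons donc un entier impair $n \geq 7$. Le lemme \ref{lem_critic} donne $c_n \in J^{7/2}_n$, et le lemme \ref{lem_stable} appliqu\'e avec $a = 7/2$ fournit, par une r\'ecurrence imm\'ediate, $f^k(c_n) \in J^{7/2}_{T^k(n)}$ pour tout $k \geq 0$, puisque $T^k(n) = f^k(n)$ est un entier positif. Par la conjecture \ref{conj_odd_critical}, $c_n$ est attir\'e par $\A_1 = \{1,2\}$, donc pour $k$ assez grand, $f^k(c_n)$ se trouve \`a distance strictement inf\'erieure \`a $1/2$ d'un des points 1 ou 2. Or l'intervalle $J^{7/2}_m = [m,\, m + 7/(2m\pi^2)]$ a une largeur au plus \'egale \`a $7/(2\pi^2) < 1/2$ et admet pour borne inf\'erieure l'entier positif $m = T^k(n)$~; l'inclusion $f^k(c_n) \in J^{7/2}_{T^k(n)}$ force alors $T^k(n) \in \{1,2\}$ d\`es que $k$ est assez grand, et l'orbite de $n$ atteint bien 1.

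Le point d\'elicat est ce ``pincement'' quantitatif~: les intervalles $J^{7/2}_m$ doivent \^etre suffisamment \'etroits pour que la convergence de $f^k(c_n)$ vers $\A_1$ force \`a son tour l'entier positif $T^k(n)$ \`a appartenir \`a $\{1,2\}$. La borne de largeur $7/(2\pi^2) < 1/2$ rend cela automatique, sans n\'ecessiter d'estimation plus fine.
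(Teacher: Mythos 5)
Votre preuve est correcte et suit essentiellement la même démarche que celle de l'article : le sens direct découle immédiatement du théorème \ref{th_odd_critical}, et le sens réciproque se ramène à un itéré impair $\geq 7$, puis utilise l'inclusion $f^k(c_n) \in J^{7/2}_{f^k(n)}$ (lemmes \ref{lem_critic} et \ref{lem_stable}) pour transférer l'attraction de $c_n$ vers $\A_1$ à l'orbite entière. Votre argument de « pincement » par la largeur $7/(2\pi^2) < 1/2$ des intervalles $J^{7/2}_m$ est une variante quantitative légèrement plus explicite de l'observation de l'article selon laquelle $f^{k_2}(c_{f^{k_1}(n)}) < 2$ force $f^{k_1+k_2}(n) = 1$.
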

\begin{proof} Une conséquence immédiate du théorème \ref{th_odd_critical} est que la conjecture \ref{conj_T} (problème $3x+1$) implique la conjecture \ref{conj_odd_critical} sur la dynamique des points critiques d'ordre impair. On démontre à présent la réciproque.

Considérons un entier $n>0$. Son orbite contient au moins un entier impair $f^{k_{1}}(n)$, $k_{1} \ge 0$. Si $f^{k_{1}}(n) \leq 5$, alors l'orbite de $n$ contient le point 1 (cf. figure \ref{fig: inv_tree}). On considère à présent le cas $f^{k_{1}}(n) \geq 7$.

Supposons que la conjecture \ref{conj_odd_critical} soit vraie. Alors il existe un entier positif $k_{2}$ tel que 
$$f^{k_{2}}\left( c_{f^{k_{1}}(n)} \right) < 2.$$
De plus, le lemme \ref{lem_stable} donne par récurrence l'inclusion
$$f^{k_{2}}\left( c_{f^{k_{1}}(n)} \right) \in J^{\frac{7}{2}}_{f^{k_{1}+k_{2}}(n)}.$$
Il découle l'égalité
$$f^{k_{1}+k_{2}}(n) = 1.$$
\end{proof}

\section{Intervalles errants}
\label{wander}

L'existence d'\textit{intervalles errants} \cite{Mel94} dans la dynamique de l'extension $f$ est une question ouverte avec d'importantes implications pour le problème $3x+1$.
\begin{conjecture} \label{conj_wander} \textbf{Absence d'intervalles errants} \\
La fonction $f$ n'admet pas d'intervalles errants dans $\bbR^{+}$.
\end{conjecture}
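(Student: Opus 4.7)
Le plan est de combiner la propri\'et\'e de d\'eriv\'ee Schwartzienne n\'egative \eqref{prop_schwartz} avec la stabilit\'e des tubes fournie par le Lemme \ref{lem_stable}, en traitant s\'epar\'ement les orbites born\'ees et non-born\'ees. Soit $J \subset \bbR^{+}$ un intervalle errant hypoth\'etique.

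Dans le cas o\`u l'orbite de $J$ est born\'ee, $f^{k}(J) \subset [0,M]$ pour un certain $M>0$. Comme les it\'er\'es $f^{k}(J)$ sont disjoints et contenus dans un ensemble de mesure finie, on a $|f^{k}(J)| \to 0$. Sur tout sous-intervalle compact de $\bbR^{+}$, $f$ poss\`ede un nombre fini de points critiques et une d\'eriv\'ee Schwartzienne n\'egative, ce qui permettrait d'invoquer le th\'eor\`eme classique d'absence d'intervalles errants pour les applications $C^{3}$ \`a Schwartzienne n\'egative et nombre fini de points critiques. La restriction de $f$ aux intervalles invariants $[0,\mu_{1}]$ et $[\mu_{1},\mu_{3}]$ rel\`eve directement de ce th\'eor\`eme, et le cas born\'e g\'en\'eral s'y ram\`ene en localisant l'orbite dans une r\'eunion finie d'intervalles $[n,n+1]$.

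Si au contraire $\limsup_{k\to\infty} \sup f^{k}(J) = \infty$, alors le Lemme \ref{lem_stable} implique qu'une fois un it\'er\'e $f^{k_{0}}(J)$ contenu dans un tube $J^{7/2}_{n_{0}}$ autour d'un entier $n_{0}$ assez grand, tous les it\'er\'es ult\'erieurs restent dans la cha\^{\i}ne de tubes $J^{7/2}_{T^{j}(n_{0})}$, de sorte que $|f^{k_{0}+j}(J)| \le 7/(2\pi^{2}T^{j}(n_{0}))$. L'orbite de $J$ r\'ealise ainsi un \og shadowing \fg\ de l'orbite enti\`ere de $n_{0}$ sous $T$. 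Je combinerais cette structure avec des estim\'ees de distorsion de type Koebe, licites loin des $c_{n}$ gr\^ace \`a \eqref{prop_schwartz}, pour forcer $|f^{k}(J)| \to 0$ \`a un taux suffisamment rapide pour contredire l'errance.

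Le principal obstacle est l'interaction entre l'infinit\'e de points critiques et l'intervalle errant : chaque tube $J^{7/2}_{n}$ avec $n$ impair contient d\'ej\`a le point critique $c_{n}$, donc $f$ agit par pliage sur les it\'er\'es $f^{k_{0}+j}(J)$ rencontrant une valeur impaire $T^{j}(n_{0})$, et les estim\'ees de distorsion standard ne s'appliquent pas directement. Un contr\^ole combinatoire fin s'imposerait, suivant de pr\`es quels $c_{T^{j}(n_{0})}$ sont approch\'es, dans l'esprit de l'analyse de la Section \ref{critic}. Je m'attends \`a ce qu'une preuve compl\`ete requi\`ere soit la Conjecture \ref{conj_unbound} (afin d'exclure \emph{a priori} les intervalles errants non-born\'es), soit un raffinement substantiel du Lemme \ref{lem_stable} donnant un taux de contraction uniforme des longueurs $|f^{k}(J)|$ \`a l'int\'erieur des tubes.
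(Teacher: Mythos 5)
L'\'enonc\'e que vous traitez est la conjecture \ref{conj_wander} : le papier ne la d\'emontre pas, il \'etablit seulement ses liens logiques avec les autres conjectures (th\'eor\`eme \ref{th_wander}). Il n'y a donc pas de preuve de r\'ef\'erence \`a laquelle comparer la v\^otre, et votre texte n'en est d'ailleurs pas une : vous reconnaissez vous-m\^eme que le cas non-born\'e reste ouvert. Votre d\'ecoupage born\'e / non-born\'e recouvre exactement le contenu du th\'eor\`eme \ref{th_wander}. Le cas born\'e (d\'eriv\'ee Schwartzienne n\'egative et nombre fini de points critiques sur un compact, th\'eor\`eme de de Melo--van Strien \cite{Mel94}) est essentiellement l'argument du papier pour la partie (a), c'est-\`a-dire l'implication conditionnelle : la conjecture \ref{conj_unbound} entra\^ine la conjecture \ref{conj_wander}. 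Le cas non-born\'e est pr\'ecis\'ement l'obstruction : la partie (b) du m\^eme th\'eor\`eme montre que toute trajectoire enti\`ere divergente engendrerait, via le lemme \ref{lem_stable}, une famille d'intervalles errants $\left\lbrace f^{i}\left( J^{7/2}_{n}\right) \right\rbrace_{i=0}^{\infty}$. Autrement dit, d\'emontrer la conjecture \ref{conj_wander} sans hypoth\`ese suppl\'ementaire d\'emontrerait la conjecture \ref{conj_T1} (absence de trajectoires divergentes du probl\`eme $3x+1$), qui est ouverte. Votre conclusion --- qu'il faut ou bien supposer la conjecture \ref{conj_unbound}, ou bien un renforcement substantiel du lemme \ref{lem_stable} --- est la bonne ; c'est exactement la r\'eduction que propose le papier.

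Au-del\`a de cette lacune assum\'ee, votre strat\'egie pour le cas non-born\'e repose sur une id\'ee erron\'ee : forcer $|f^{k}(J)| \to 0$ \og \`a un taux suffisamment rapide \fg{} ne contredit en rien l'errance. Un intervalle errant est un intervalle non d\'eg\'en\'er\'e dont les it\'er\'es sont deux \`a deux disjoints et qui n'est attir\'e par aucune orbite p\'eriodique ; la d\'ecroissance des longueurs, quelle qu'en soit la vitesse, est parfaitement compatible avec ces deux propri\'et\'es (elle est m\^eme in\'evitable le long d'une sous-suite dans le cas born\'e, par disjonction et finitude de la mesure). Les tubes $J^{7/2}_{n}$ du lemme \ref{lem_stable} en sont l'illustration : leur longueur $\frac{7}{2\pi^{2}n}$ tend vers $0$ le long d'une orbite divergente, et c'est justement ce qui en fait des candidats d'intervalles errants dans la preuve de la partie (b). Pour conclure dans le cas non-born\'e, il faudrait d\'emontrer soit que les it\'er\'es de $J$ finissent par se rencontrer, soit que $J$ tombe dans le bassin d'un cycle attractif, et aucune estim\'ee de distorsion de type Koebe ne fournit cela le long d'une orbite qui traverse une infinit\'e de points critiques $c_{n}$.
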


Elle est au c$\oe$ur du théorème ci-dessous.
\begin{theoreme}
\label{th_wander}

On a les relations suivantes entre conjectures :\\
(a) la conjecture \ref{conj_unbound} entraîne la conjecture \ref{conj_wander},\\
(b) la conjecture \ref{conj_wander} entraîne la conjecture \ref{conj_T1}.
\end{theoreme}

\begin{proof}
Par l'absurde.\\
(a) Supposons que la conjecture \ref{conj_unbound} soit vraie et que la conjecture \ref{conj_wander} soit fausse. Cela implique que la fonction $f$ admette une famille d'intervalles errants sur une partie bornée de $\bbR^{+}$. Or ce serait en contradiction avec la propriété \eqref{prop_schwartz} : la dérivée Schwartzienne de $f$ est négative sur $\bbR^{+}$. \\

\noindent
(b) Supposons que la conjecture \ref{conj_T1} soit fausse. Alors il existe un entier positif $n$ tel que $\lim_{i \to \infty} f^{i}(n) = +\infty$. D'après le lemme \ref{lem_stable}, les intervalles $\left\lbrace f^{i}\left( J_{n}^{7/2} \right)  \right\rbrace _{i=0}^{\infty}$ sont inclus dans les intervalles $\left\lbrace J_{f^{i}(n)}^{7/2} \right\rbrace _{i=0}^{\infty}$, deux à deux disjoints. Il s'agit d'une famille d'intervalles errants.
\end{proof}

Une synthèse des liens logiques entre conjectures est donnée en figure \ref{fig: conjectures}.

\begin{figure}[ht]
\centering
\includegraphics[scale=0.75]{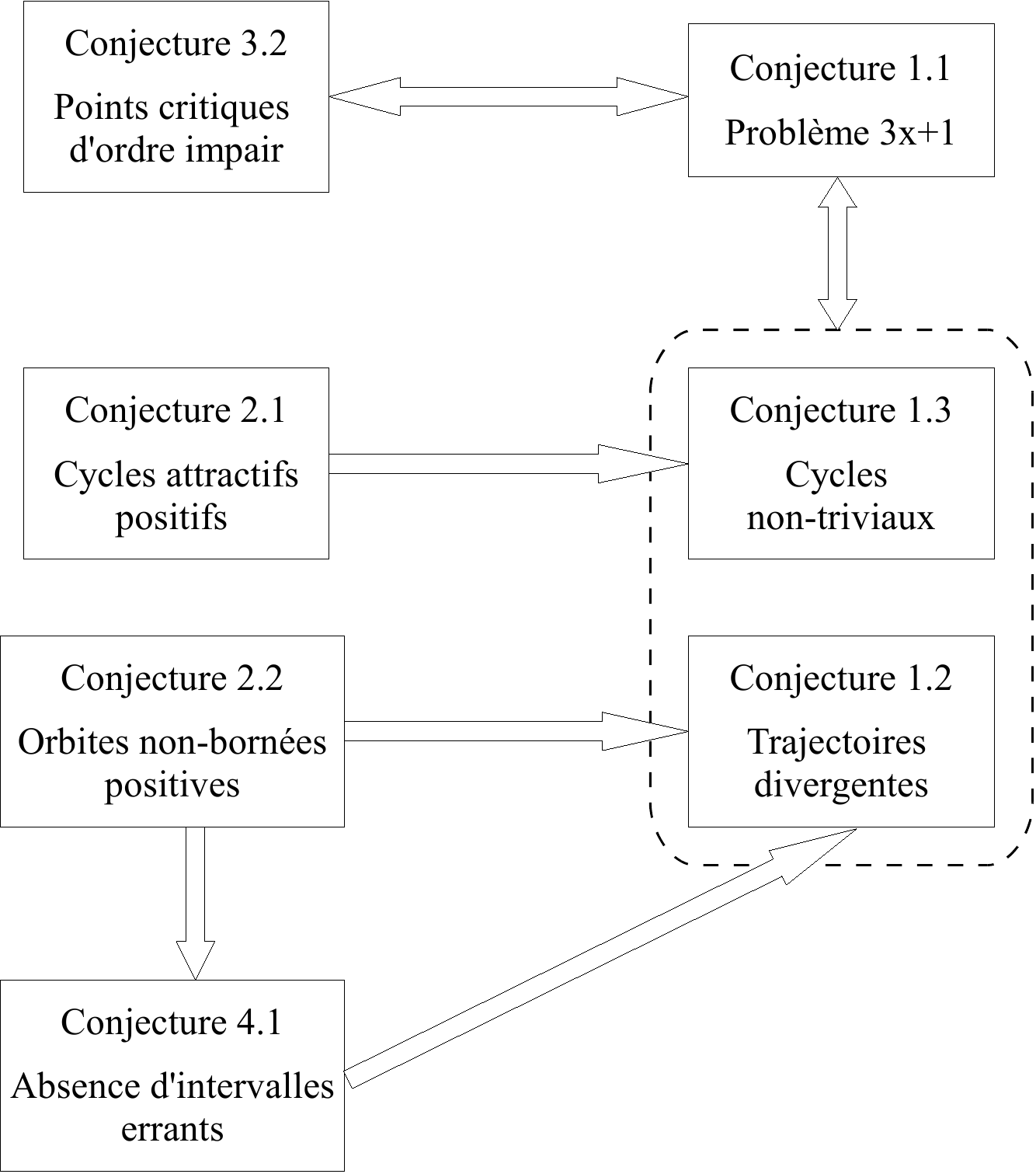}
\caption{Liens logiques entre conjectures. La partie gauche concerne le cadre continu $\bbR^{+}$ et la partie droite le cadre discret $\bbZ^{+}$.}
\label{fig: conjectures}
\end{figure}
\section{Extension sur les réels négatifs}
\label{extension_neg}

L'ensemble $\mathbb{R}^{-}$ des réels négatifs est également invariant par la fonction $f$ définie par \eqref{eq_ext}. La dynamique sur les entiers négatifs est alors identique, au signe près, à celle de la fonction ``$3x-1$'', notée $U$ et définie sur les entiers positifs par

\begin{equation}
U(n) := \left\{\begin{array}{cl}
  (3n-1)/2 & \mbox{si n est impair,} \\
  n/2 & \mbox{sinon.} 
\end{array}\right.
\end{equation}

En effet, on a la relation de conjugaison $f(-n) = -U(n)$ pour tout entier $n$ positif. La fonction $U$ admet le point fixe 1 et a deux cycles connus : $\lbrace 5, 7, 10\rbrace$ de période 3 et $\lbrace 17, 25, 37, 55, 82, 41, 61, 91, 136, 68, 34 \rbrace$ de période 11.
Cela conduit à formuler le \textit{``problème $3x-1$''} :
\begin{conjecture}\label{conj_U} \textbf{Problème $3x-1$}\\
Pour tout entier $n > 0$,  il existe un entier $k \geq 0$ tel que $U^{k}(n) = 1, 5$ ou $17$.
\end{conjecture}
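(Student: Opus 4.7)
L'approche naturelle consiste à transposer au demi-axe négatif la méthodologie développée pour le problème $3x+1$. Grâce à la conjugaison $f(-n) = -U(n)$, la conjecture \ref{conj_U} équivaut à l'énoncé que toute orbite de $f$ issue d'un entier strictement négatif aboutit à l'un des trois cycles d'entiers connus de $f$ sur $\mathbb{R}^{-}$, images par $n \mapsto -n$ des cycles de $U$. Je décomposerais la preuve en deux volets parallèles à ceux utilisés pour $T$ : (i) absence de tout autre cycle attractif d'entiers négatifs, (ii) absence de trajectoires entières divergeant vers $-\infty$.

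Pour le volet (i), on vérifie d'abord que la dérivée Schwartzienne de $f$ reste négative sur $\mathbb{R}^{-}$, de sorte que le théorème de Singer garantit que tout cycle attractif contient au moins un point critique. On établit ensuite les analogues des Lemmes \ref{lem_critic} et \ref{lem_stable} sur $\mathbb{R}^{-}$, localisant les points critiques $c_{-n}$ au voisinage immédiat de chaque entier négatif et fournissant des intervalles stables $J^{a}_{-n}$ tels que $f(J^{a}_{-n}) \subset J^{a}_{-U(n)}$. L'analogue du Théorème \ref{th_odd_critical} se construit alors en déployant, pour chacun des trois cycles connus de $U$, l'arbre de ses orbites inverses, ce qui ramène le volet (i) à un énoncé de type ``Points critiques'' à vérifier numériquement sur un segment initial puis à prolonger par récurrence via le lemme stable transposé.

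Pour le volet (ii), j'appliquerais l'heuristique de Crandall à $U$ : la structure probabiliste des parités étant identique à celle de $T$, elle prédit une vitesse moyenne de $\sqrt{3}/2 < 1$, donc toute orbite générique doit décroître en moyenne géométrique et aboutir dans un domaine borné. Pour convertir ce raisonnement en preuve, je chercherais à établir l'analogue négatif de la conjecture \ref{conj_unbound} : par itération du lemme stable transposé, toute trajectoire entière divergente engendrerait un intervalle inclus dans $U^{\infty}_{f} \cap \mathbb{R}^{-}$, en contradiction avec l'intérieur vide. Cette partie s'appuierait également sur une vérification numérique préalable confirmant que tout $n \leq N$ atteint l'un des trois cycles pour une borne $N$ suffisamment large.

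L'obstacle principal, attendu, est la conversion de l'argument heuristique de Crandall en preuve rigoureuse d'absence d'orbite entière divergente ; ce verrou est strictement identique à celui qui entrave le problème $3x+1$. La présence de trois cycles connus au lieu d'un ne fournit aucun raccourci spécifique, et son franchissement requerrait un contrôle fin de la distribution des parités des itérés de $U$ ou une analyse topologique plus sophistiquée de la structure de $U^{\infty}_{f}$ sur $\mathbb{R}^{-}$.
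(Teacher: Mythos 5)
This statement is a \emph{conjecture} in the paper, not a theorem: the authors give no proof of it, and none is known. It is the $3x-1$ analogue of the $3x+1$ problem and carries the same unresolved core. Your proposal is accordingly a research program rather than a proof, and you yourself name the decisive gap --- converting Crandall's heuristic into a rigorous exclusion of divergent integer orbits --- without closing it. Nothing in the sketch settles the cycle question either, and the negative-axis analogue of Conjecture \ref{conj_unbound} that you invoke for volet (ii) is itself conjectural. So, as a proof attempt, this fails; the honest assessment in your last paragraph is correct.

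Beyond the acknowledged gap, two concrete steps of your plan contradict facts recorded in the paper, so the transposition from $\bbR^{+}$ to $\bbR^{-}$ is not as mechanical as you assume. First, by \eqref{prop_schwartz_neg} the Schwarzian derivative of $f$ is \emph{not} everywhere negative on $\bbR^{-}$ (the paper exhibits $Sf(-0.2)=39.961\ldots>0$), so Singer's theorem is unavailable there, and your volet (i) cannot begin by asserting that every attracting cycle captures a critical point. Second, by \eqref{prop_int_cycles_neg} every cycle of negative integers is \emph{repulsive} (the cycles through $-5$ and $-17$ have multipliers $9/8$ and $2187/2048$), in sharp contrast with \eqref{prop_int_cycles} on the positive side; the actual attractors are $0$, $\nu_{1}$ and the nearby non-integer cycles $\B_{1},\ldots,\B_{4}$ of \eqref{prop_cycles_neg}. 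Consequently an analogue of Theorem \ref{th_odd_critical} cannot conclude by trapping an interval $J^{\frac{7}{2}}_{n}$ inside the immediate basin of an integer cycle, as is done on $\bbR^{+}$ with $\A_{1}$: such basins do not exist on $\bbR^{-}$, the targets would have to be $\nu_{1}$, $\B_{1}$, $\B_{3}$, and the paper's numerical data show that the correlation between the orbit of $c_{n}$ and that of $n$ breaks down for infinitely many even $n$ (e.g.\ $n \equiv -2 \pmod{32}$), with critical orbits scattering among six distinct basins. Even repaired, your construction would at best yield a conditional reduction parallel to the paper's Conjecture \ref{conj_odd_critical_neg}, leaving Conjecture \ref{conj_U} itself exactly as open as the $3x+1$ problem.
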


Les valeurs de $f$ sur $\mathbb{R}^{+}$ et $(-\infty, -1]$ sont liées pas l'équation fonctionnelle
\begin{equation}\label{eq_fonct}
f(x) - f(-1-x) = 2x+1
\end{equation}
de sorte que les points fixes de $f$ sur $(-\infty, -1]$ sont exactement les points $\nu_{i} := -1-\mu_{i}$, où $\lbrace \mu_{i} \rbrace_{i=0}^{\infty}$ désigne l'ensemble des points fixes de $f$ sur $\mathbb{R}^{+}$, $\mu_{0} = 0 < \mu_{1} < 1 < \mu_{2} < 2 < \ldots$.\\

Néanmoins, la dynamique de $f$ sur $\mathbb{R}^{-}$ diffère partiellement de celle que l'on a pu décrire sur $\mathbb{R}^{+}$, comme le montrent les propriétés \eqref{prop_cycles_neg} à \eqref{prop_unbound_neg}.
\begin{align}
\label{prop_cycles_neg}
& \text{Les points fixes 0 et $\nu_{1}=-1.277\ldots$ sont attractifs, ainsi que les} \\
& \text{cycles } \notag \\
& \notag \B_{1} := \left\lbrace  x, f(x), f^{2}(x)\right\rbrace \text{ où } x = -5.046002\ldots, \\
 \notag & \B_{2} := \left\lbrace  x, f(x), f^{2}(x)\right\rbrace \text{ où } x = -4.998739\ldots, \\
 \notag & \B_{3} := \left\lbrace  x, f(x), \ldots, f^{10}(x)\right\rbrace \text{ où } x = -17.002728\ldots, \\
 \notag & \B_{4} := \left\lbrace  x, f(x), \ldots, f^{10}(x)\right\rbrace \text{ où } x = -16.999991\ldots.. \\
\label{prop_schwartz_neg}
& \text{La dérivée Schwartzienne de $f$ n'est pas partout négative sur } \mathbb{R}^{-}. \\
\label{prop_int_inv_neg}
& \text{Les intervalles $[-1, 0]$ et $[\nu_{1}, -1]$ sont invariants par $f$}. \\
\label{prop_int_cycles_neg}
& \text{Tout cycle d'entiers négatifs est répulsif.} \\
\label{prop_unbound_neg}
& \text{Il existe des orbites monotones non-bornées sur }  \mathbb{R}^{-}. 
\end{align}

\begin{proof}(indications)
\begin{flalign*}
\intertext{Propriété \eqref{prop_cycles_neg}: Les vitesses d'attraction sont données dans le tableau \ref{tab_cyc_neg}.}
\intertext{Propriété \eqref{prop_schwartz_neg}: La dérivée Schwartzienne est positive sur un intervalle contenant le point -0.2. On a en effet $Sf(-0.2) = 39.961\ldots$, où}
Sf(x) = \frac{f'''(x)}{f'(x)} - \frac{3}{2}\left( \frac{f''(x)}{f'(x)}\right) ^{2}.
\intertext{Propriété \eqref{prop_int_inv_neg}: La fonction $f$ est strictement décroissante sur l'intervalle $[\nu_{1}, 0]$ contenant le point fixe répulsif -1.}
\intertext{Propriété \eqref{prop_int_cycles_neg}: Voir les indications dans \cite[p.16]{Cha96}.}
\intertext{Propriété \eqref{prop_unbound_neg}: La démonstration est similaire à celle de \eqref{prop_unbound}.}
\end{flalign*}
\end{proof}

\begin{remarque}
Les cycles $\B_{2}$ et $\B_{4}$ sont très faiblement attractifs car leur multiplicateur est proche de 1 (cf. tableau \ref{tab_cyc_neg}). On vérifie également que les cycles contenant les points -5 et -17 sont très faiblement répulsifs, avec pour multiplicateurs respectifs les rationnels 9/8 et 2187/2048.
\end{remarque}

\begin{table}
\begin{center}
\begin{tabular}{crl}
 Point ou cycle attractif & Période & Multiplicateur \\
 \hline
 0 & 1 & 0.5 \\
 $\nu_{1}$ & 1 & $0.385708\ldots$ \\
 $\B_{1}$ & 3 & $0.036389\ldots$ \\
 $\B_{2}$ & 3 & $0.866135\ldots$ \\
 $\B_{3}$ & 11 & $0.003773\ldots$ \\
 $\B_{4}$ & 11 & $0.926287\ldots$ \\
 \hline
\end{tabular}
\caption{Coefficients multiplicateurs des points et cycles attractifs sur les réels négatifs.}
\label{tab_cyc_neg}
\end{center}
\end{table}

Comme précédemment, on note $c_{n}$ les points critiques proches des entiers $n < 0$, et on peut montrer que les itérés successifs de $c_{n}$ pour $n$ impair négatif restent proches des itérés de $n$, par valeurs inférieures. Nous avons vérifié numériquement pour tout entier $n$, $-1000 < n <0$, que 
\begin{itemize}
\item si $n$ est impair et $f^{k}(n) = -1$ (resp. -5, -17) pour un entier $k$, alors l'orbite  de $c_{n}$ converge vers $\nu_{1}$ (resp. $\B_{1}$, $\B_{3}$);
\item si $n$ est pair et $f^{k}(n) = -1$ (resp. -5, -17) pour un entier $k$, alors l'orbite  de $c_{n}$ converge vers $0$ (resp. $\B_{2}$, $\B_{4}$), sauf pour $n$=-34, -66, -98, -130, -132, -162, -174, -194, -202, -226, $\ldots$ où l'orbite  de $c_{n}$ converge vers $\B_{3}$, $\B_{1}$, $\B_{3}$, $\nu_{1}$, $\B_{3}$, $\B_{1}$, $\nu_{1}$, $\B_{1}$, $\nu_{1}$, $\nu_{1}$, $\ldots$ respectivement. On note que les entiers $n \equiv -2 \pmod{32}$ semblent toujours faire partie des exceptions.
\end{itemize}

Le plus souvent, lorsque $n<0$ est pair, l'orbite de $c_{n}$ reste proche de l'orbite de $n$, par valeurs supérieures. Pour $n$=-34, -98, -132, -162, -202, $\ldots$ les itérés de $c_{n}$ finissent pas être inférieurs aux itérés de $n$, sans s'en éloigner pour autant. Pour $n$=-66, -130, -174, -194, -258, $\ldots$ les orbites de $n$ et de $c_{n}$ sont décorrélées après un nombre fini d'itérations. Dans ce dernier cas, on observe une répartition des orbites de $c_{n}$ dans chacun des six bassins d'attraction de $\mathbb{R}^{-}$ : 0, $\nu_{1}$, $\B_{1}$, $\B_{2}$, $\B_{3}$ et $\B_{4}$.

\begin{conjecture}\label{conj_odd_critical_neg} \textbf{Points critiques d'ordre négatif impair}\\
Les points critiques $c_{n}$ sont attirés soit par le point fixe $\nu_{1}$, soit par l'un des cycles $\B_{1}$ ou $\B_{3}$, pour tout entier $n < 0$ impair.
\end{conjecture}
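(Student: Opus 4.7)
Le plan est d'adapter la strat\'egie d\'eploy\'ee pour la conjecture \ref{conj_odd_critical} via le th\'eor\`eme \ref{th_odd_critical}, en exploitant la conjugaison $f(-n)=-U(n)$ pour ramener l'\'etude \`a la dynamique discr\`ete du probl\`eme $3x-1$. Le r\'esultat le plus fort que l'on puisse esp\'erer \'etablir rigoureusement sera \emph{conditionnel} \`a la conjecture \ref{conj_U}, sous la forme suivante : pour tout entier $n<0$ impair, si la $U$-orbite de $|n|$ aboutit \`a $1$ (resp.\ $5$, $17$), alors $c_{n}$ est attir\'e par $\nu_{1}$ (resp.\ $\B_{1}$, $\B_{3}$). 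Un tel \'enonc\'e, conjugu\'e \`a la conjecture \ref{conj_U}, entra\^ine la conjecture \ref{conj_odd_critical_neg}.

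Premi\`erement, je d\'eriverais les analogues n\'egatifs des lemmes \ref{lem_critic} et \ref{lem_stable}. Par un calcul direct sur $f'$, on obtient un encadrement du type $n-O(1/|n|)<c_{n}<n$ ou $n<c_{n}<n+O(1/|n|)$ selon la parit\'e de $n<0$, en utilisant comme dans \ref{lem_critic} les encadrements classiques de $\sin t$ et $\cos t$ (en tenant compte du fait que le pr\'efacteur $x/2+1/4$ devient n\'egatif pour $x<-1/2$, ce qui inverse l'orientation g\'eom\'etrique par rapport au cas positif). On d\'efinit alors une famille $\wtJ^{a}_{n}$ d'intervalles ferm\'es de longueur $a/(\pi^{2}|n|)$ plac\'es du bon c\^ot\'e de l'entier $n$, et l'on \'etablit $f(\wtJ^{a}_{n})\subset\wtJ^{a}_{f(n)}$ pour $a$ convenable et $|n|$ assez grand, en d\'ecalquant les majorations du lemme \ref{lem_stable}.

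Ensuite, par analogie avec la d\'ecomposition de l'arbre inverse du probl\`eme $3x+1$ autour des portes d'entr\'ee $\{13,16,40\}$, on \'etablirait num\'eriquement pour chacun des trois attracteurs $1$, $5$, $17$ une liste finie d'entiers impairs $m$ telle que toute $U$-orbite d'entier impair $n\ge 7$ aboutissant \`a cet attracteur poss\`ede un it\'er\'e dans la liste. Pour chaque $m$ de ces listes, on v\'erifierait alors explicitement que les it\'er\'es successifs de $\wtJ^{a}_{-m}$ convergent vers le bassin d'attraction imm\'ediat de $\nu_{1}$, $\B_{1}$ ou $\B_{3}$, en pratiquant des majorations num\'eriques comparables aux calculs de la preuve de \ref{th_odd_critical} (contr\^ole du point frontalier analogue \`a $x_{1}$, images successives unimodales le long du cycle, etc.).

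La difficult\'e principale tient \`a l'intrication des bassins d'attraction. Les cycles $\B_{1}$ et $\B_{2}$ (p\'eriode 3) sont tr\`es proches g\'eom\'etriquement, de m\^eme que $\B_{3}$ et $\B_{4}$ (p\'eriode 11) ; avec des multiplicateurs $0.866\ldots$ et $0.926\ldots$, les cycles $\B_{2}$ et $\B_{4}$ ont des bassins d'attraction imm\'ediats tr\`es \'etroits, et le texte signale d\'ej\`a qu'une pr\'ecision de l'ordre de plusieurs centaines de chiffres d\'ecimaux est n\'ecessaire. S\'eparer rigoureusement $\B_{1}$ de $\B_{2}$ et $\B_{3}$ de $\B_{4}$ exige donc des encadrements nettement plus fins que dans le cas positif. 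De plus, la propri\'et\'e \eqref{prop_schwartz_neg} interdit d'invoquer la th\'eorie de Singer pour exclure \emph{a priori} d'autres bassins parasites ; les exceptions observ\'ees num\'eriquement pour certaines valeurs paires ($n\equiv-2\pmod{32}$, etc.) sugg\`erent d'ailleurs que la restriction aux $n$ impairs dans la conjecture n'est pas accessoire, et c'est bien l\`a que la sym\'etrie heuristique entre $3x+1$ et $3x-1$ se rompt.
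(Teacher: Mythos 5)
Le point essentiel : l'énoncé en question est une \emph{conjecture} et le papier n'en donne aucune démonstration. Les auteurs se contentent d'une vérification numérique pour $-1000<n<0$ et de l'observation (annoncée mais non démontrée : \og{}on peut montrer\fg{}) que les itérés de $c_{n}$ pour $n<0$ impair suivent ceux de $n$ par valeurs inférieures. Votre proposition n'est donc comparable à aucune preuve du texte, et elle n'en est pas une non plus : comme vous le reconnaissez vous-même, le mieux que votre plan puisse livrer est un analogue conditionnel du théorème \ref{th_odd_critical} (si la $U$-orbite de $|n|$ atteint $1$, $5$ ou $17$, alors $c_{n}$ est attiré par $\nu_{1}$, $\B_{1}$ ou $\B_{3}$ respectivement), et le passage de cet énoncé à la conjecture \ref{conj_odd_critical_neg} exige la conjecture \ref{conj_U}, qui est ouverte. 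Il n'y a donc pas ici de démonstration à évaluer, seulement un programme ; il faut le dire explicitement plutôt que de présenter la chose comme une preuve.

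Cela étant, votre programme est cohérent avec ce que le papier rapporte et avec la structure de la section 3, et les obstacles que vous identifiez sont les bons : la perte de la dérivée Schwartzienne négative sur $\mathbb{R}^{-}$ (propriété \eqref{prop_schwartz_neg}) interdit l'argument de Singer utilisé implicitement via \eqref{prop_schwartz} et \eqref{prop_int_cycles} ; la quasi-coïncidence de $\B_{1}$ avec $\B_{2}$ et de $\B_{3}$ avec $\B_{4}$, avec des multiplicateurs $0.866\ldots$ et $0.926\ldots$ pour $\B_{2}$ et $\B_{4}$, rend la séparation rigoureuse des bassins bien plus délicate que la comparaison à $x_{1}$ dans la preuve du théorème \ref{th_odd_critical} ; et le fait que les orbites critiques suivent les orbites entières \emph{par valeurs inférieures} impose des intervalles unilatéraux $\left[ n-\frac{a}{\pi^{2}|n|},\, n\right]$ plutôt qu'un simple décalque de $J^{a}_{n}$. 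Deux points mériteraient d'être ajoutés à votre plan : d'une part la constante explicite jouant le rôle de $a=\frac{7}{2}$ doit être revalidée pour les petites valeurs de $|n|$ (les cas exceptionnels du lemme \ref{lem_stable} étaient traités un à un) ; d'autre part, les \og{}portes d'entrée\fg{} vers $5$ et $17$ sont ici des éléments de cycles de $U$ de périodes $3$ et $11$, et non d'un cycle de période $2$, ce qui allonge sensiblement les vérifications numériques d'images successives que vous proposez.
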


\section{Dynamique asymptotique}
\label{asymptotic}

Dans cette partie, nous étudions le comportement moyen de séquences finies d'itérations $S = \lbrace f^{i}(x) \rbrace_{i=0}^{n}$ telles que $\min \lbrace |f^{i}(x)| \rbrace_{i=0}^{n-1}  \gg 1$, afin de déterminer la vitesse moyenne asymptotique (i.e.\ au voisinage de $\pm\infty$).

Nous dirons de manière informelle que $S$ est  uniformément distribuée modulo 2 (u. d. mod 2) si et seulement si la discrépance à l'origine de $\lbrace f^{i}(x) \bmod 2 \rbrace_{i=0}^{n-1}$ dans l'intervalle $[0,2]$, notée $D_{n}^{*}(S \bmod 2)$, vérifie $D_{n}^{*}(S \bmod 2) \ll 1$.\footnote{On note $x \bmod 2$ la valeur modulo 2 de tout réel $x$, définie par $x \bmod 2 := x - 2 \lfloor \frac{x}{2} \rfloor$.}

On rappelle que la notion de discrépance est une mesure de l'uniformité de la distribution d'une séquence de points $\mathcal{X} = \lbrace  x_{1}, \ldots, x_{n} \rbrace  \in [a,b]^{n}$ et est définie par 
\begin{equation}\label{discrep}
D_{n}^{*}(\mathcal{X}) := \sup_{a \leq c \leq b } \left| \frac{ \left| \lbrace  x_{1}, \ldots ,  x_{n} \rbrace \cap [a,c)  \right|}{n}  - \frac{c-a}{b-a} \right|
\end{equation}

Elle intervient notamment dans l'inégalité de Koksma \cite{Kui74} :
\begin{theoreme} (Koksma)
Soit $f$ : $[a,b] \rightarrow \mathbb{R}$ une fonction à variation (totale) $V(f)$ bornée. Alors pour toute séquence $\mathcal{X} = \lbrace  x_{1}, \ldots, x_{n}\rbrace  \in [a,b]^{n}$, on a
$$ \left| \frac{1}{n}  \sum_{i=1}^{n} f(x_{i}) - \frac{1}{b-a}\int_{a}^{b} f(t) \, \mathrm{dt} \right| < V(f) D_{n}^{*}(\mathcal{X})$$
\end{theoreme}

Nous considérons dorénavant que la fonction $f$ définie par \eqref{eq_ext} s'applique sur $ \mathbb{R}$ tout entier. Comme $f$ ne s'annule qu'en $0$, il suit que $f^{n}(x)$ est de même signe que $x$ pour tout réel $x \neq 0$ et tout entier $n$.

Notre approche consiste à approximer $ f(x)/x $ par son asymptote sinusoïdale
\begin{equation}
g(x):=1-\frac{\cos(\pi x)}{2}
\end{equation}
 dont on détermine la moyenne géométrique.
\begin{lemme}\label{lem_geom}
La moyenne géométrique $\tau$ de la fonction réelle $ g(x) =  1-\cos(\pi x)/2$ sur $[0, 2]$ est égale à $\alpha/4$, où $\alpha = 2+\sqrt{3} $ est racine du polynôme $X^{2} - 4X + 1$.
\end{lemme}
\begin{proof}
On cherche à calculer $\tau := \exp\left(\frac{1}{2}\int_{0}^{2}\ln \left( g(t)\right)  \, \mathrm{dt} \right) $
avec 
$$g(t) = 1-\cos(\pi t)/2 = \left( \alpha - e^{i \pi t} \right)\left( \alpha - e^{-i \pi t} \right) / (4 \alpha)  = \left| \alpha - e^{i \pi t} \right|^{2} / (4 \alpha).$$
 On obtient  $$ \ln  \tau  = \int_{0}^{2} \ln \left| \alpha - e^{i \pi t} \right|  \, \mathrm{dt} - \ln\left( 4 \alpha\right).$$
 
La formule de Jensen relative aux fonctions analytiques sur le disque de centre $\alpha$ et de rayon 1 donne le résultat attendu 
$$\ln  \tau  = 2 \ln\alpha - \ln ( 4 \alpha ) = \ln \left(  \frac{\alpha}{4} \right) .$$
\end{proof}
On montre à présent qu'au voisinage de $\pm \infty$ toute séquence d'itérations u. d. mod 2 de $f$ décroit avec une vitesse moyenne proche de $ \tau = ( 2+\sqrt{3} ) / 4 \simeq 0.933 \ldots $.

\begin{theoreme}\label{th_estim}
Soit une séquence finie d'itérations $S = \lbrace f^{i}(x) \rbrace_{i=0}^{n}$ telle que $\min \lbrace |f^{i}(x)| \rbrace_{i=0}^{n-1} \geq M$ pour un réel $M > \frac{1}{3}$.
Alors on a\\
$$ \left| \frac{1}{n} \ln \left( \frac{f^{n}(x)}{x} \right)  - \ln \tau \right| < 2 \left( \ln 3\right)  D_{n}^{*}(S \bmod 2) - \ln \left( 1 - \frac{1}{3M} \right) .$$
\end{theoreme}
\begin{proof}
On considère la formulation $ f(t) = g(t) \left( t + h(t) \right)$ où $h$ est la fonction périodique $$h(t) := \frac{1 - \cos (\pi t)}{4 g(t)} = \frac{1 - \cos (\pi t)}{4-2\cos (\pi t)}.$$
On a donc $$\frac{f^{n}(x)}{x} = \prod_{i=0}^{n-1} \frac{f^{i+1}(x)}{f^{i}(x)} 
= \prod_{i=0}^{n-1} g \left( f^{i}(x) \right) \left( 1 + \frac{h \left( f^{i}(x) \right)}{f^{i}(x)} \right) $$
Il vient alors $$ \frac{1}{n} \ln \left( \frac{f^{n}(x)}{x} \right) - \ln  \tau  = 
 A + B$$
avec
$$A = \frac{1}{n}  \sum_{i=0}^{n-1} \ln \left( g \left( f^{i}(x) \right) \right) - \ln \tau$$
et 
$$B = \frac{1}{n}  \sum_{i=0}^{n-1} \ln  \left( 1 + \frac{h \left( f^{i}(x) \right)}{f^{i}(x)} \right) $$
D'après le lemme \ref{lem_geom}, 
$$ \ln \tau  = \frac{1}{2}\int_{0}^{2}\ln \left( g(t)\right)  \, \mathrm{dt}$$
On applique l'inégalité de Koksma :
$$ \vert A \vert \le V(\phi) D_{n}^{*}(S \bmod 2)$$ où $V(\phi)$ est la variation totale de la fonction $\phi(t) := \ln \left( g(t) \right)$ sur $[0,2]$, soit $ V(\phi) = 2 \phi(1) - \phi(2) - \phi(0) = 2 \ln 3$.\\

Pour majorer $|B|$, on vérifie que la fonction $h(t)$ est à valeur dans $[0,1/3]$ avec un maximum en $t=1$. On en déduit que $$|B| \le \max \left(  -\ln  \left( 1 - \frac{1}{3 M} \right) , \: \ln  \left( 1 + \frac{1}{3 M} \right) \right) = -\ln  \left( 1 - \frac{1}{3 M} \right).$$
\end{proof}

Le théorème \ref{th_estim} est inopérant pour les séquences d'entiers, dont la vitesse moyenne attendue est $\sqrt{3} / 2$, strictement inférieure à $\tau$. Il permet toutefois d'établir un lien entre la vitesse moyenne et la distribution modulo 2 des itérations.

\begin{theoreme}\label{th_liminf}
Soit $x$ un réel d'orbite $\lbrace f^{i}(x) \rbrace_{i=0}^{\infty}$ telle que 
$$\liminf_{i \to \infty} |f^{i}(x)| > \frac{1}{3(1 - \tau)} \simeq 4.97\ldots $$
Alors l'orbite de $x$ n'est pas uniformément distribuée modulo 2.
\end{theoreme}
\begin{proof}
Il existe un entier positif $N$ et un réel $a>1$ tels que $$|f^{i}(x)| \geq \frac{a}{3(1 - \tau)}$$ pour tout $i \geq N$.\\

On considère les séquences finies $S_{n} = \lbrace f^{i}(x)\rbrace_{i=N}^{n+N}$ pour tout $n$ entier positif, et on pose $M_{n} := \min \lbrace |f^{i}(x)| \rbrace_{i=N}^{n+N}$.\\

D'après le théorème \ref{th_estim},
$$  \frac{1}{n} \ln \left( \frac{f^{n+N}(x)}{f^{N}(x)} \right)  - \ln \tau  < 2 (\ln 3) D_{n}^{*}(S_{n} \bmod 2) - \ln \left( 1 - \frac{1}{3M_{n}} \right) .$$

Il vient
$$ 2 (\ln 3)  D_{n}^{*}(S_{n} \bmod 2) > A_{n} + B_{n}$$
avec
$$ A_{n} = \frac{1}{n} \ln \left( \frac{f^{n+N}(x)}{f^{N}(x)} \right)$$
et
$$ B_{n} = - \ln \tau + \ln \left( 1 - \frac{1}{3M_{n}} \right).$$

D'une part, on vérifie aisément que $ \liminf_{n \to \infty} A_{n} \geq 0$. D'autre part, on a
$$B_{n} \geq - \ln \tau + \ln \left( 1 - \frac{1 - \tau}{a} \right) = \ln \left( 1 + \frac{(a-1)(1-\tau)}{a \tau} \right) > 0.$$

On obtient donc le résultat souhaité :
$$ \liminf_{n \to \infty} D_{n}^{*}(S_{n} \bmod 2) \geq \frac{\ln \left( 1 + \frac{(a-1)(1-\tau)}{a \tau} \right)}{2\ln 3} > 0$$
\end{proof}

L'existence d'orbites tendant vers l'infini a été prouvée par Chamberland pour la fonction $f$ et le corollaire \ref{cor_div} donne une condition nécessaire sur l'ensemble des valeurs modulo 2 d'une telle orbite.

\begin{corollaire}\label{cor_div}
Soit $x$ un réel d'orbite $\lbrace f^{i}(x) \rbrace_{i=0}^{\infty}$ divergente telle que 
$$\lim_{i \to \infty} |f^{i}(x)| = +\infty.$$
Alors l'orbite de $x$ n'est pas u. d. mod 2.
\end{corollaire}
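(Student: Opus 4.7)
Le plan est d'obtenir ce corollaire comme une cons�quence imm�diate du th�or�me \ref{th_liminf}. Il s'agit essentiellement de v�rifier que l'hypoth�se plus forte $\lim_{i \to \infty} |f^{i}(x)| = +\infty$ implique l'hypoth�se num�rique $\liminf_{i \to \infty} |f^{i}(x)| > \frac{1}{3(1 - \tau)}$ requise par le th�or�me.

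D'abord, j'observerai que la convergence de la suite $|f^{i}(x)|$ vers $+\infty$ entra�ne, par d�finition m�me de la limite, que $\liminf_{i \to \infty} |f^{i}(x)| = +\infty$. En particulier, cette limite inf�rieure exc�de le seuil $\frac{1}{3(1 - \tau)} \simeq 4.97$. Le th�or�me \ref{th_liminf} s'applique alors, et l'on conclut directement que l'orbite de $x$ n'est pas uniform�ment distribu�e modulo 2.

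Aucun obstacle technique n'est � pr�voir, puisque le corollaire se r�duit � une sp�cialisation imm�diate du th�or�me pr�c�dent : l'essentiel du travail analytique est absorb� par la d�monstration de celui-ci, laquelle repose sur le lemme \ref{lem_geom} et l'in�galit� de Koksma. L'int�r�t de la formulation tient plut�t � son contenu conceptuel : toute orbite tendant vers l'infini viole n�cessairement l'hypoth�se d'�quir�partition modulo 2 qui sous-tend l'argument heuristique de R. E. Crandall pr�sent� dans l'introduction, et la borne inf�rieure explicite sur la discr�pance obtenue � la fin de la preuve du th�or�me \ref{th_liminf} quantifie cet �cart � l'uniformit� le long d'une telle orbite divergente.
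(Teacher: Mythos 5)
Votre preuve est correcte et co\"{\i}ncide avec l'argument implicite du papier, qui \'enonce ce corollaire sans d\'emonstration comme sp\'ecialisation imm\'ediate du th\'eor\`eme \ref{th_liminf} : l'hypoth\`ese $\lim_{i \to \infty} |f^{i}(x)| = +\infty$ donne $\liminf_{i \to \infty} |f^{i}(x)| = +\infty > \frac{1}{3(1-\tau)}$, et le th\'eor\`eme conclut. Rien \`a redire.
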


\begin{remarque}
 Ce résultat renforce la conjecture \ref{conj_unbound}. En effet, on peut s'attendre à ce que la condition de distribution uniforme modulo 2 des itérations de $f$ soit le plus souvent valide au voisinage de $\pm\infty$, compte tenu des propriétés suivantes :
 \begin{itemize}
\item le diamètre et la densité des zones contractantes tend vers 0,
\item l'amplitude des oscillations devient infiniment grande.
\end{itemize}
\end{remarque}


\end{document}